\documentclass[11pt]{amsart}

\usepackage[left=1in, right=1in, top=1.2in, bottom=1.2in]{geometry}
\usepackage{microtype, mathtools, mathrsfs, enumerate, dsfont, esvect}
\linespread{1.2}
\setlength{\parskip}{.2em}

\usepackage{verbatim}
\usepackage[linktocpage]{hyperref}
\hypersetup{
    colorlinks=true,        
    linkcolor=red,          
    citecolor=blue,         
    filecolor=magenta,      
    urlcolor=cyan           
}

\usepackage{amssymb}
\usepackage{url}

\hyphenation{arch-i-med-e-an}


\newtheorem{theorem}{Theorem}[section]
\newtheorem{thm}{Theorem}[section]

\newtheorem{prop}[theorem]{Proposition}
\newtheorem{lemma}[theorem]{Lemma}

\newtheorem{cor}[theorem]{Corollary}

\theoremstyle{definition}

\theoremstyle{plain}
\numberwithin{equation}{theorem}

\theoremstyle{remark}
\newtheorem{claim}[theorem]{Claim}

\newcommand{\F}{{\mathbb F}}

\newcommand{\Q}{{\mathbb Q}}

\newcommand{\Z}{{\mathbb Z}}
\newcommand{\N}{{\mathbb N}}

\newcommand{\fp}{\mathfrak p}

\newcommand{\id}{\mathrm{id}}

\newcommand{\Qbar}{\overline{\Q}}

\DeclareMathOperator{\End}{End}

\newcommand{\isomto}{\overset{\sim}{\rightarrow}}

\newcommand{\bP}{{\mathbb P}}

\newcommand{\bG}{{\mathbb G}}

\newcommand{\lra}{\longrightarrow}

\newcommand{\cL}{\mathcal{L}}

\newif\ifhascomments \hascommentstrue
\ifhascomments
  \newcommand{\dragos}[1]{{\color{red}[[\ensuremath{\bigstar\bigstar\bigstar} #1]]}}
  \newcommand{\matt}[1]{{\color{red}[[\ensuremath{\spadesuit\spadesuit\spadesuit} #1]]}}
\else
  \newcommand{\dragos}[1]{}
  \newcommand{\matt}[1]{}
\fi

\begin{document}

\title[Periodic subvarieties and primitive ideals]{Periodic subvarieties of semiabelian varieties and annihilators of irreducible representations}

\author{Jason P. Bell}
\author{Dragos Ghioca}

\address{Department of Pure Mathematics\\
University of Waterloo\\
Waterloo, ON N2L 3G1\\
Canada}
\email{jpbell@uwaterloo.ca}

\address{Department of Mathematics \\
University of British Columbia\\
Vancouver, BC V6T 1Z2\\
Canada}
\email{dghioca@math.ubc.ca}

\begin{abstract}
Let $G$ be a semiabelian variety defined over a field of characteristic $0$, endowed with an endomorphism $\Phi$. We prove there is no proper subvariety $Y\subset G$ which intersects the orbit of each periodic point of $G$ under the action of $\Phi$.  As an application, we are able to give a topological characterization of the annihilator ideals of irreducible representations in certain skew polynomial algebras.
\end{abstract}

\subjclass[2010]{
37P15, 
20G15, 
32H50. 
}

\keywords{primitive ideals, algebraic dynamics, periodic subvarieties}

\thanks{The authors were partially supported by Discovery Grants from the National Sciences and Engineering Research Council of Canada.}

\maketitle


\section{Introduction}
\label{intro section}

\subsection{A sample of our results} 
In our paper we prove various connections between purely algebraic properties of rings and arithmetic dynamics on semiabelian varieties. In order to state one of our main results,  we first need to introduce the terminology of \emph{primitive ideals}, of \emph{skew polynomial rings}, and also of \emph{rational ideals}.

Given a simple left $R$-module $M$, the annihilator, ${\rm Ann}_R(M)$, of $M$ is the set of $x\in R$ such that $xM=(0)$.  This is a two-sided ideal and any ideal of this form is called a \emph{primitive} ideal.  To be pedantic, it is called a \emph{left primitive} ideal, with right primitivity of ideals being defined analogously.  Bergman \cite{Ber64} gave an example of a ring in which $(0)$ is right primitive but not left primitive, but his example is highly pathological and in practice, for well behaved classes of algebras, one has that the notions of left and right primitivity coincide and hence we omit the ``left'' when talking about primitive ideals, since it is well known that the two notions coincide in the context we consider in this paper.  As is customary, we say that a ring $R$ is \emph{primitive} if $(0)$ is a primitive ideal.  Also, we recall that given a ring $R$ and an automorphism $\sigma$ of $R$, one can form the \emph{skew polynomial} ring $R[x;\sigma]$, which is, as a set, $R[x]$, but with ``twisted'' multiplication given by $x\cdot r = \sigma(r) x$ for $r\in R$. One can similarly define the skew Laurent polynomial ring $R[x^{\pm 1};\sigma]$.  

Finally, in order to introduce the notion of rational ideals (see also Section~\ref{subsec:rings}), we note that given a left noetherian ring $R$ and a prime ideal $P$, Goldie (see \cite[Chapter 3]{Row1}) shows that one can localize $S=R/P$ at the set of elements that are not left or right zero divisors and one will obtain a quotient ring $Q(S)$, which is a simple Artinian ring.  One can intuitively think of $R/P$ as being a ``noncommutative field of fractions'' of $R$ and the Artin-Wedderburn theorem says that $Q(S)$ is isomorphic to a matrix ring over a division ring.  In the case that $k$ is a field and $R$ is a $k$-algebra, we say that a prime ideal $P$ of $R$ is \emph{rational} if the centre of $Q(R/P)$ is an algebraic extension of the base field.

In Section~\ref{sec:proofs 2}, we prove the following result.
\begin{thm} 
\label{thm:primitive}
Let $k$ be a field of characteristic zero, let $R=k[x_1^{\pm 1},\ldots ,x_d^{\pm 1}]$ and let $\sigma$ be a $k$-algebra automorphism of $R$.  Then the following results hold:
\begin{enumerate}
\item[(a)] a prime ideal $P$ of $S:=R[t;\sigma]$ is primitive if and only if $P$ is locally closed in ${\rm Spec}(S)$;
\item[(b)] a prime ideal $P$ of $T:=R[t^{\pm 1};\sigma]$ is primitive if and only if $P$ is rational.  
\end{enumerate}
\end{thm}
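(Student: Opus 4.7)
The plan is to recognize Theorem~\ref{thm:primitive} as a Dixmier--Moeglin equivalence for skew (Laurent) polynomial extensions of $R$, with the main dynamical theorem on periodic subvarieties supplying the one non-formal implication. Since $S$ and $T$ are affine noetherian $k$-algebras, the standard chain ``locally closed $\Rightarrow$ primitive $\Rightarrow$ rational'' holds automatically from general noncommutative arguments, so the content of the theorem lies entirely in the converse implications.

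For part~(b), I would apply the Goodearl--Letzter stratification of $\Spec(T)$ over the $\sigma$-primes of $R$. Any prime $P$ of $T$ contracts to a $\sigma$-prime $Q = P\cap R$, which corresponds to a finite $\Phi$-orbit of irreducible subvarieties of $\mathbb{G}_m^d$, where $\Phi$ is the geometric automorphism dual to $\sigma$. A standard Morita / crossed product reduction brings us to the case that $Q$ is itself a $\sigma$-invariant prime, corresponding to a single $\Phi$-invariant irreducible subvariety $V$. Localizing at the generic point of $V$ identifies primes of $T$ in the stratum of $Q$ with primes of $K[t^{\pm 1};\sigma]$, where $K = k(V)$. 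A direct center computation shows $P$ is rational precisely when either (i) $\sigma|_K$ has infinite order, $K^{\sigma}$ is algebraic over $k$, and $P = QT$; or (ii) $\sigma|_K$ has finite order $n$, $K^{\sigma}$ is algebraic over $k$, and $P$ corresponds to a maximal ideal of the commutative ring $K^{\sigma}[t^n,t^{-n}]$.

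The heart of the argument is to show such rational $P$ are primitive. Case~(ii) forces $V$ to be a point (or a zero-dimensional orbit before reduction), and the primitive module is the explicit one-dimensional representation on which $R$ acts through the residue field and $t$ acts as an appropriate scalar. In case~(i), for each $\Phi|_V$-periodic point $x \in V(\overline k)$ of period $m$ and each $\lambda \in \overline k^{\times}$, one builds the $m$-dimensional simple $T$-module $M_{x,\lambda}$ with basis indexed by the orbit of $x$: $R$ acts by evaluation along the orbit, and $t$ acts as a cyclic shift scaled by $\lambda$. Its annihilator in $T$ contracts in $R$ to the $\sigma$-prime cut out by the orbit of $x$, which is strictly larger than $Q$ unless that orbit is Zariski dense in $V$. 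The main dynamical theorem of the paper says precisely that no proper subvariety of $V$ meets every $\Phi|_V$-orbit of a periodic point, so $\bigcap_{x}\operatorname{Ann}(M_{x,\lambda}) \cap R = Q$; a compatible choice of the scalars $\lambda$ then yields $\bigcap \operatorname{Ann}(M_{x,\lambda}) = QT = P$. A standard Jacobson-density or direct-limit argument (after base change to $\overline k$, then descent) finally produces a single faithful simple $T/P$-module.

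Part~(a) reduces to part~(b). Primes $P\in\Spec(S)$ containing $t$ come from primes of the commutative Jacobson ring $R$, so primitive $\Leftrightarrow$ $P\cap R$ maximal $\Leftrightarrow$ $P$ is closed, hence locally closed, in $\Spec(S)$. For $P\not\ni t$, Ore localization at the powers of $t$ gives a bijection with $\Spec(T)\setminus V(t)$ that preserves primitivity (a faithful simple $S/P$-module is automatically $t$-torsion-free and hence extends to a simple $T/PT$-module) and that sends locally closed primes of $\Spec(S)$ to rational primes of $\Spec(T)$. Part~(b) then concludes the proof. The single genuine obstacle throughout is the dynamical module construction of case~(i); every other step is a formal reduction or standard bookkeeping.
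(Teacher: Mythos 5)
Your general framework---reduce via the Goodearl--Letzter stratification to a $\sigma$-invariant prime, localize at the generic point, and push the real content into the conversion of a dynamical statement into a ring-theoretic one---matches the paper's strategy in spirit. However there are two genuine gaps, and you have also mis-identified which dynamical input drives which half of the theorem.

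First, in part~(b), after building the cyclic modules $M_{x,\lambda}$ supported on finite $\Phi$-orbits of periodic points, you invoke the periodic-subvariety theorem to conclude $\bigcap_x \mathrm{Ann}(M_{x,\lambda})=P$, and then assert that ``a standard Jacobson-density or direct-limit argument'' produces a single faithful simple $T/P$-module. There is no such standard argument: the condition $\bigcap_x \mathrm{Ann}(M_{x,\lambda})=P$ only says that $T/P$ is semiprimitive (Jacobson radical zero), which is much weaker than primitivity---compare $\mathbb{Z}$, which is semiprimitive but not primitive. The paper instead invokes Jordan's criterion (Proposition~\ref{rem:Jordan}): $(R/Q_1)[t^{\pm d};\tau]$ is primitive if and only if there is a single closed point of $X_1$ with a Zariski \emph{dense} $\tau$-orbit. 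That dense-orbit point is supplied not by Theorem~\ref{thm:endomorphisms}/\ref{thm:translations} but by the Medvedev--Scanlon theorem for semiabelian varieties \cite{GS-semiabelian}, once Lemma~\ref{lem:red} has shown that $X_1\cong\mathbb{G}_m^d$ whenever $(0)$ is rational. A dense-orbit point is typically not periodic, so your module construction, which runs over periodic points, is addressing a different (and insufficient) question.

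Second, the reduction of part~(a) to part~(b) via Ore localization at powers of $t$ does not go through: ``locally closed in $\Spec(S)$'' and ``rational in $\Spec(T)$'' are not formally equivalent under that bijection, and asserting that the correspondence ``sends locally closed primes of $\Spec(S)$ to rational primes of $\Spec(T)$'' is exactly the content that needs to be proved. The paper instead proves~(a) directly: via the Leroy--Matczuk criterion ($\sigma$-special elements), primitivity of the stratum is equivalent to property~(II) from the introduction, and it is \emph{here}---not in part~(b)---that Theorem~\ref{thm:translations} is used, to show that~(II) is equivalent to~(I), which in turn is the condition characterizing local closedness via Lemma~\ref{lem:lc}. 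So the periodic-subvariety theorem drives part~(a), and the Medvedev--Scanlon dense-orbit theorem drives part~(b); your proposal has swapped them and left the bridging steps open in both directions.
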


\subsection{The motivation for our problem from a ring theoretical point of view}
\label{subsec:rings}

One of the interesting developments that has come out of the evolution of noncommutative projective geometry is the relationship between ring theoretic properties of algebras produced from geometric data and dynamical phenomena involving algebraic varieties equipped with an automorphism.  
  In fact, these connections were in part what gave impetus to the first collaboration of the authors (along with Tom Tucker) \cite{BGT}, where one of the consequences of the authors' work was to prove a conjecture of Keeler, Rogalski, and Stafford \cite{KRS} concerning when algebras associated to the na\"ive blowup of an integral projective scheme at a closed point are noetherian.  In fact, many basic ring theoretic questions about twisted homogeneous coordinate rings and skew polynomial rings often lead to deep and highly non-trivial dynamical questions.  A recent example of this can be seen by the work of Brown, Carvalho, and Matczuk \cite{BCM17}, who studied the property of when skew polynomial rings of automorphism type have the property that the injective hulls of simple modules are locally Artinian.   In their paper, in addition to characterizing when this property holds for many skew polynomial rings, they use an argument supplied by Goodearl to fill a gap that occurs in a paper of Jordan \cite{Jor93} about skew polynomial algebras.   Jordan looked at a special example of skew rings in which $R=\mathbb{C}[x^{\pm 1},y^{\pm 1}]$ and $\sigma$ is the $\mathbb{C}$-algebra automorphism given by $\sigma(x)=y$ and $\sigma(y)=x^{-1}y$.  Jordan's goal was to show that the corresponding skew polynomial algebra has the property that every irreducible representation has a  non-trivial annihilator.  

Given an associative ring $R$, a fundamental problem is to classify the irreducible representations (simple $R$-modules) of $R$.  Understanding these representations yields a great deal of information about the ring itself and this representation theoretic approach to understanding rings has become an essential part of the study of representations of finite groups (and their group rings) and can be seen in the various local-global principles that arise in commutative algebra.  Unfortunately, it is generally an intractable problem to classify the simple modules of a general ring and in practice one often settles for a coarser understanding, proposed as an alternative by Dixmier, to instead understand annihilators of simple modules, which leads naturally to the aforementioned problem of characterizing primitive ideals in the prime spectrum.

We note that if $M$ is a simple $R$-module with annihilator $P$, then $M$ can be viewed as an $R/P$-module in a natural way and it becomes a faithful $R/P$-module; that is, ${\rm Ann}_{R/P}(M)=(0)$.  In this case, Jacobson's density theorem (see \cite[Chapter 2]{Row1}) says that $R/P$ embeds as a dense subring of an endomorphism ring of a $\Delta$-vector space, where $\Delta={\rm End}_{R/P}(M)$. For commutative rings $R$, a simple module is isomorphic to a cyclic module of the form $R/P$ with $P$ a maximal ideal.  In this case, $P$ is the annihilator so primitive ideals are precisely the maximal ideals of $R$, which are exactly the closed points in ${\rm Spec}(R)$.  In general, the primitive ideals form a subset of the prime ideals of $R$, where a two-sided ideal $I$ of $R$ is prime if whenever $JL\subseteq I$, with $J,L$ two-sided ideals of $R$, we have either $J\subseteq I$ or $L\subseteq I$.  The problem of determining the subset of primitive ideals of ${\rm Spec}(R)$ is an important part of understanding the representation theory of $R$.  In the case of enveloping algebras of finite-dimensional complex Lie algebras, Dixmier \cite{Dix77} and Moeglin \cite{Moe80}, gave a concrete description of the primitive spectrum in terms of the related properties of an ideal of being rational and also of being primitive.  Dixmier and Moeglin proved that for $P\in {\rm Spec}(U)$ with $U$ the enveloping algebra of a finite-dimensional complex Lie algebra, we have
$$P~{\rm primitive}\iff P~{\rm rational}\iff P~{\rm locally~closed~in}~{\rm Spec}(U).$$
Our Theorem~\ref{thm:primitive} is inspired in part by the results of Dixmier and Moeglin and proves analogues of their results for prime ideals in rings of the form $R[t;\sigma]$ or $R[t^{\pm 1};\sigma]$, where $R=k[x_1^{\pm 1},\dots, x_d^{\pm 1}]$.

\subsection{Connections to algebraic geometry and our results regarding arithmetic dynamics on semiabelian varieties}
\label{subsec:geometry}

When one gets into rings ``coming from geometry'', the problem of classifying primitive ideals often takes on a dynamical flavour.  One of the simplest classes of such rings are twisted polynomial rings $R[x;\sigma]$ with $R$ a finitely generated $k$-algebra and $\sigma:R\to R$ a $k$-algebra automorphism of $R$.  Here, the ring $R[x;\sigma]$ is just the polynomial ring $R[x]$ as a set, but with ``twisted'' multiplication given by $x\cdot r = \sigma(r)\cdot x$ for $r\in R$.  In this case, the primitive ideals were characterized by Leroy and Matczuk \cite{LM96}.
Given a ring $R$ and $\sigma\in {\rm Aut}(R)$, we say that an element $a$ of $R$ is $\sigma$-\emph{special} if for every $\sigma$-stable ideal $I$ of $R$ we have $a\sigma(a)\cdots \sigma^n(a)\in I$ for some $n$ and there is no $m\ge 1$ such that $a\sigma(a)\cdots \sigma^m(a)=0$.  Leroy and Matczuk \cite{LM96} show that if $R$ is a ring and $\sigma$ is an automorphism of $R$ then $R[x;\sigma]$ is primitive if and only if it has a $\sigma$-special element and $\sigma$ has infinite order.  When one looks at the case when $R$ is a finitely generated $k$-algebra that is an integral domain and $\sigma$ is a $k$-algebra automorphism, one has a corresponding automorphism $\tau$ of the irreducible affine variety $X={\rm Spec}(R)$. The $\sigma$-special property can then be translated as follows: $R$ is $\sigma$-special if and only if there is a proper closed subset $Y$ of $X$ such that every $\tau$-periodic subvariety of $X$ has an irreducible component contained in $Y$.  On the other hand, the property of $(0)$ being locally closed in the skew polynomial algebra is equivalent to the union of all proper periodic subvarieties being a proper Zariski closed set (see our Lemma \ref{lem:lc}).  It is natural to ask whether primitive ideals are precisely those prime ideals that are locally closed in the Zariski topology for algebras of the form $R[x;\sigma]$.  
In terms of arithmetic dynamics, if one has a quasiprojective variety $X$ equipped with an endomorphism $\Phi$, this is asking about the equivalence of the two properties:
\begin{enumerate}
\item[(I)] there is a proper subvariety $Y\subset X$ that contains all the proper, irreducible, periodic subvarieties of $X$ (under the action of $\Phi$). 
\item[(II)] there is a proper subvariety $Y\subset X$ that contains some iterate of each proper, irreducible, periodic subvariety of $X$.
\end{enumerate}
Clearly, property~(I) implies property~(II); the difficult part is to prove that (II) also implies (I).

In Section~\ref{sec:proofs} we prove the following results for semiabelian varieties (which are themselves extensions of abelian varieties by algebraic tori in the category of algebraic groups). 
 
\begin{thm}
\label{thm:endomorphisms}
Let $G$ be a semiabelian variety defined over an algebraically closed field $K$ of characteristic $0$ and let $\Phi$ be a dominant group endomorphism of $G$. Then there is no proper subvariety of $G$ which intersects the orbit of each torsion point that is periodic under the action of $\Phi$.
\end{thm}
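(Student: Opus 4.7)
The plan is to argue by contradiction, proceeding by induction on $\dim G$. Suppose $Y\subsetneq G$ is a proper closed subvariety with the property that $Y\cap\Orb(x)\neq\emptyset$ for every torsion $x\in G_{\tors}$. Since $\Phi$ is a dominant group endomorphism of the semiabelian variety $G$, its image is a Zariski-closed subgroup of dimension $\dim G$, so $\Phi$ is surjective; its kernel is therefore zero-dimensional and finite, so $\Phi$ is an isogeny. In particular, $\Phi$ restricts to an automorphism of each finite group $G[N]$, whence every torsion point of $G$ is $\Phi$-periodic, and its orbit is a finite subset of $G_{\tors}$.

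Next, I would reduce to the case where $Y$ itself is a finite union of torsion-translates of algebraic subgroups. Setting $W:=\overline{Y\cap G_{\tors}}$ gives a proper subvariety of $G$ contained in $Y$, and $W$ still meets every torsion orbit (the representative in $Y$ is torsion, so it lies in $W$). By Hindry's generalization of the Manin-Mumford theorem to semiabelian varieties, one may write
\[
W\;=\;\bigcup_{j=1}^{r}(\tau_j+H_j),
\]
where each $H_j\subsetneq G$ is a connected proper algebraic subgroup and each $\tau_j\in G_{\tors}$. It now suffices to derive a contradiction from the conditions that $W\subsetneq G$ and that every $\Phi$-orbit on $G_{\tors}$ meets $W$.

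The induction on $\dim G$ has an immediate base case $\dim G\le 1$: a proper subvariety of $\mathbb{G}_m$ or of an elliptic curve is a finite set, whereas there are infinitely many $\Phi$-orbits on $G_{\tors}$ (since $|G[N]|$ is unbounded as $N\to\infty$ while each orbit is finite), so no finite set can meet all of them. For the inductive step, the strategy is to produce a proper connected $\Phi$-invariant subgroup $H\le G$ and transfer the problem via the quotient map $\pi:G\to G/H$: the semiabelian variety $G/H$ has strictly smaller dimension, $\Phi$ descends to a dominant group endomorphism $\bar\Phi$ on $G/H$, and the image $\overline{\pi(W)}\subseteq G/H$ is a closed subvariety that meets every $\bar\Phi$-orbit on $(G/H)_{\tors}$. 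If $\overline{\pi(W)}$ is a proper subvariety of $G/H$, the induction hypothesis applies and gives the desired contradiction. A natural candidate for $H$ is the smallest algebraic subgroup of $G$ containing the forward orbit $\bigcup_{k\ge 0}\Phi^{k}(H_j)$ for some $j$, which is automatically $\Phi$-invariant.

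The principal obstacle is that this reduction can fail for two reasons: the constructed $H$ might equal $G$, or $\overline{\pi(W)}$ might fill out $G/H$ entirely (meaning $W+H=G$). In the first case, one attempts a direct estimate comparing the number of $\Phi$-orbits on $G[\ell^n]$ (bounded below in terms of the order of $\Phi|_{G[\ell^n]}$, which in turn is controlled by the characteristic polynomial of $\Phi$ on the Tate module of $G$) against the upper bound $|W\cap G[\ell^n]|\le\sum_j|H_j[\ell^n]|$ coming from Hindry, exploiting that each $H_j$ is proper so that its Tate module has strictly smaller rank; this works cleanly when no $H_j$ is a codimension-one toric subgroup. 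In the remaining case one must carry out a fiberwise analysis: for each torsion $\bar y\in G/H$, the fiber $\pi^{-1}(\bar y)$ is a torsion translate of $H$ on which $\Phi$ acts by an affine transformation of $H$, and applying the statement (or the inductive hypothesis) inside the semiabelian variety $H$ eventually forces $W$ to contain a Zariski-dense collection of entire fibers of $\pi$, contradicting $W\subsetneq G$.
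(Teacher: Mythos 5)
Your outline identifies the right high-level shape — use Laurent/Hindry to make $Y$ a finite union of torsion translates of proper subgroups, then try to extract a proper $\Phi$-invariant subgroup $H$ so one can induct on $G/H$ and on $H$ — and the ``fiberwise analysis'' you gesture at in the last paragraph is essentially what the paper carries out in its Lemma~\ref{claim:red 3 semiabelian}. But the proposal has a genuine gap precisely where the real work lies, namely when the candidate subgroup $H$ comes out equal to $G$. You acknowledge this explicitly: the ``direct estimate'' is sketched only heuristically, and you even concede it ``works cleanly when no $H_j$ is a codimension-one toric subgroup'', which is exactly the situation you cannot rule out. An argument that is only claimed to work in some configurations is not a proof of the theorem. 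The paper's route around this is structural: it first reduces, via Lemma~\ref{claim:red 1 abelian} and Poincar\'e reducibility, to $G$ a cartesian power of a \emph{simple} semiabelian variety (either $\bG_m^m$ or $A^m$ for $A$ simple abelian), and only there does it run a concrete mod-$p$ counting argument. For $\bG_m^m$ this uses eigenvectors of the integer matrix $M_\Phi$ modulo primes $p$ that split in the splitting field of its characteristic polynomial, and a pigeonhole argument on the resulting congruences $w\cdot M_\Phi^n v\equiv 0\pmod p$ forces $\dim U<m$, i.e., it forces the proper invariant subgroup to exist. For $A^m$ it uses the minimal polynomial $f$ of $\Phi$: if $f$ is reducible one gets a $\Phi$-invariant exact sequence directly (the analogue of your quotient step), and if $f$ is irreducible then for split primes one has $\Phi^{p-1}=\mathrm{id}$ on $A[p]^m$, and a count of at most $k(p-1)p^{2g(m-1)}$ solutions of the defining equations of $Y$ against $p^{2gm}$ periodic torsion points yields the contradiction for $p>k$. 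Your sketch has nothing playing the role of this reduction to simple powers, and without it the bad case cannot be dispatched.

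Separately, there is a small but real error in your first paragraph: a dominant group endomorphism $\Phi$ is indeed an isogeny in characteristic $0$, but it does \emph{not} restrict to an automorphism of every $G[N]$ (take $\Phi=[2]$ on $\bG_m$ and $N$ even). Consequently, not every torsion point is $\Phi$-periodic; it is only preperiodic. The conclusion you wanted --- that $W=\overline{Y\cap G_{\tors}}$ still meets every torsion orbit --- is salvageable by exactly the observation in the paper's Lemma~\ref{lem:torsion-periodic}: every torsion point is preperiodic, so its forward orbit enters a periodic torsion orbit, and $Y$ meeting the latter implies $Y$ meets the former. You should replace the false claim with this reasoning.
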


Theorem~\ref{thm:endomorphisms} yields that neither property (I) nor property (II) holds for a dominant group endomorphism of a semiabelian variety. Also, we note that it suffices in Theorem~\ref{thm:endomorphisms} to restrict to orbits of \emph{torsion} points of $G$ that are periodic under the action of $\Phi$ and still derive that no proper subvariety of $G$ may intersect all these orbits. In particular, Theorem~\ref{thm:endomorphisms} allows us to derive the equivalence of properties~(I)~and~(II) for any regular self-map on a semiabelian variety, as established in the next result.

\begin{thm}
\label{thm:translations}
Let $G$ be a semiabelian variety defined over an algebraically closed field $K$ of characteristic $0$ and let $\Phi:G\lra G$ be a dominant, regular self-map. Then one of the following two statements must hold:
\begin{enumerate}
\item[(A)] there is a proper subvariety $X\subset G$ that contains all the proper, irreducible, periodic subvarieties of $G$ (under the action of $\Phi$). 
\item[(B)] there is no proper subvariety $Y\subset G$ that contains some iterate of each proper, irreducible, periodic subvariety of $G$.
\end{enumerate}
\end{thm}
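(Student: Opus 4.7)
The plan is to reduce Theorem~\ref{thm:translations} to Theorem~\ref{thm:endomorphisms} (applied to a dominant group endomorphism, or an iterate of one) whenever possible, and to argue property~(B) directly in the remaining cases. Write $\Phi(x)=\psi(x)+a$, where $\psi\in\End(G)$ is a dominant (hence surjective, and therefore an isogeny) group endomorphism and $a\in G$. Let $H:=(\psi-\id)(G)$, a closed connected algebraic subgroup of $G$, and let $\pi\colon G\to G/H$ be the quotient map; the induced self-map on $G/H$ is just the translation $\bar\Phi=\tau_{\bar a}$ with $\bar a:=\pi(a)$. The proof splits according to whether $\bar a$ has finite or infinite order in $G/H$.

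\textbf{Torsion case.} Suppose $\bar a$ has finite order. The first step is to prove there exist $N\geq 1$ and $c\in G$ with $\Phi^N(c)=c$; equivalently, that $s_N:=\sum_{i=0}^{N-1}\psi^i(a)$ lies in $(\id-\psi^N)(G)$ for some $N$. Granted this, the translation $\tau_c$ conjugates $\Phi^N$ to the group endomorphism $\psi^N$, and since proper irreducible $\Phi$-periodic subvarieties coincide with proper irreducible $\Phi^N$-periodic subvarieties, Theorem~\ref{thm:endomorphisms} applied to $\psi^N$ (which is again dominant) shows that no proper subvariety of $G$ meets every orbit of torsion periodic points of $\psi^N$. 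Transporting back via $\tau_c$, for any proper $Y\subset G$ one obtains a periodic point of $\Phi$ whose full orbit avoids $Y$; this point, regarded as a $0$-dimensional irreducible $\Phi$-periodic subvariety, establishes property~(B).

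\textbf{Non-torsion case.} Suppose $\bar a$ has infinite order. After replacing $\Phi$ by an iterate (which preserves the set of periodic subvarieties), we may assume $\bar a\in\bar N^\circ$, where $\bar N:=\overline{\langle\bar a\rangle}\subseteq G/H$ is a positive-dimensional algebraic subgroup. Set $M:=\pi^{-1}(\bar N^\circ)\subseteq G$; then $M$ is a connected algebraic subgroup containing $H$, and $\psi(M)=M$ (since $\psi$ is an isogeny with $\bar\psi=\id$ on $G/H$). A direct computation shows $\Phi(M+c)=M+c$ for every $c\in G$: indeed $\Phi(c)-c=(\psi-\id)(c)+a\in H+M=M$, using $a\in M$ (because $\bar a\in\bar N^\circ=\pi(M)$). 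So each $M$-coset is a $\Phi$-fixed irreducible subvariety. If $M\neq G$, these cosets are proper; for any proper $Y\subset G$, the set $\{c : M+c\subseteq Y\}=\bigcap_{m\in M}(Y-m)$ is a proper closed subset, and any $c$ outside it yields a periodic $V=M+c$ with no iterate in $Y$. If instead $M=G$, every proper periodic $V$ must surject onto $G/H$; assuming~(A) fails so that such $V$ are Zariski dense in $G$, a Chow- or Hilbert-scheme argument shows they fit into at least one positive-dimensional algebraic family, and for any proper $Y\subset G$ a generic member of such a family has its finite orbit disjoint from $Y$, establishing~(B).

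\textbf{Main obstacle.} The hardest step is the fixed-point claim in the torsion case, namely that $s_N\in(\id-\psi^N)(G)$ for some $N$ when $\bar a$ is torsion in $G/H$. This is an obstruction-vanishing statement for the class $[s_N]\in G/(\id-\psi^N)(G)$, and it is subtle when $\psi$ has roots-of-unity eigenvalues on invariant subgroups of $G$, since $(\id-\psi^N)(G)$ may sit strictly inside $H$. Choosing $N$ divisible by the order of $\bar a$ and also by the orders of all roots-of-unity eigenvalues of $\psi$ on the rational cocharacter lattice (of the toric part) and on the rational Tate module (of the abelian part) should force the obstruction to vanish in each isotypic component and thereby deliver the needed fixed point.
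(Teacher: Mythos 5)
Your approach is genuinely different from the paper's, and while the torsion case and the non‑torsion case with $M\neq G$ are on solid (or salvageable) ground, the non‑torsion case with $M=G$ has a real gap that the sketch does not close.

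To compare routes: the paper does not split on $H=(\psi-\id)(G)$ and torsion vs.\ non‑torsion of $\bar a$. Instead it factors the minimal polynomial of $\psi$ as $f(t)=(t-1)^r g(t)$ with $g(1)\neq0$, decomposes $G$ up to isogeny as $G_1\times G_2$ with $G_1=(\psi-\id)^r(G)$ and $G_2=g(\psi)(G)$, conjugates away the translation on $G_1$ (where $\psi-\id$ is dominant, so Theorem~\ref{thm:endomorphisms} gives (B) there, hence (B) for the product by a product lemma), and is thereby reduced to $\psi-\id$ nilpotent on $G$. In the nilpotent case it takes a $\Phi$‑fixed periodic $W$ of maximal dimension, shows that $x+W$ is fixed for $x\in\ker(\psi-\id)$, and induces up $\ker(\psi-\id)^j$ until the union becomes dense; nilpotence guarantees termination. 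Notice this argument works uniformly, regardless of whether the residual translation on any quotient is torsion.

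For your torsion case: the claim that some $\Phi^N$ has a fixed point is correct, but your justification is incomplete. The issue is that ``$N$ divisible by the order of $\bar a$ and by the orders of root‑of‑unity eigenvalues of $\psi$'' pulls in opposite directions if those orders share a common factor: with $p_N(t)=1+t+\cdots+t^{N-1}$, having a $d$‑th root of unity as eigenvalue with $d\mid N$ makes $p_N(\psi)$ non‑dominant, so $(\id-\psi^N)(G)=(\id-\psi)\circ p_N(\psi)(G)$ can be strictly smaller than $H$. The clean way to make the claim rigorous is precisely the Poincar\'e‑type splitting the paper uses: on the factor $G_1$ where $\psi-\id$ is dominant hence surjective, $s_N=p_N(\psi)(a_1)=(\id-\psi^N)(b_1)$ for any $b_1$ with $(\id-\psi)(b_1)=a_1$, so the obstruction vanishes regardless of $N$; and on the unipotent factor $G_2$ one checks $s_N\equiv Na_2\pmod{(\psi-\id)(G_2)}$, so taking $N$ a multiple of the order of $\bar a_2$ suffices. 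Without this splitting, ``vanishing in each isotypic component'' is not established.

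For your non‑torsion case: the subcase $M\neq G$ is fine and is in fact cleaner than the corresponding step in the paper. But the subcase $M=G$ does not work as sketched. There is no a priori bound on the degree, dimension, or period of $\Phi$‑periodic subvarieties, so ``they fit into at least one positive‑dimensional algebraic family'' via a Chow/Hilbert argument is not justified; the periodic subvarieties could be an infinite union of isolated points (in the appropriate Chow variety) of unbounded degree and unbounded period, and then there is no generic member. Moreover, even granting a family, the claim that a generic member ``has its finite orbit disjoint from $Y$'' requires a uniform bound on the orbit length, which you have not obtained. This is the genuine missing idea; the paper avoids it entirely by never separating on $\bar a$ and instead running the $\ker(\Psi^j)+W$ induction directly after reducing to the unipotent case.

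Finally, one small bookkeeping item: you use repeatedly (in both cases) that replacing $\Phi$ by an iterate $\Phi^N$ does not change the set of proper, irreducible, periodic subvarieties and does not change the truth of (A) or (B); this is true but should be proved, and the proof for (B) needs the observation that $\bigcup_{i=0}^{N-1}\Phi^i(Y)$ is still a proper subvariety, exactly as in Lemma~\ref{lem:iterate}.
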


In particular, Theorem~\ref{thm:translations} in the special case $G=\bG_m^n$ is used in deriving the conclusion in Theorem~\ref{thm:primitive}.

\subsection{Further connections with algebraic dynamics}

Besides the motivation from algebra (explained in Subsection~\ref{subsec:geometry}),  Theorems~\ref{thm:endomorphisms} and \ref{thm:translations} are also motivated by various questions in algebraic geometry and arithmetic dynamics, as we will explain below. 

If $\Phi$ is a polarizable endomorphism of a projective variety $X$ defined over a field of characteristic $0$ (i.e., there exists an ample line bundle $\cL$ on $X$ such that $\Phi^*\cL$ is linearly equivalent with $\cL^{\otimes d}$ for some $d>1$), then Fakhruddin \cite[Theorem~5.1]{Fakhruddin} proved that the periodic points are dense. Furthermore, a more careful analysis of the proof of \cite[Proposition~5.5]{Fakhruddin} yields the existence of a set $S$ of periodic points of $X$ with the property that choosing a point from the orbit of each point in $S$ would always yield a Zariski dense set in $X$; thus the conclusion in Theorem~\ref{thm:endomorphisms} holds for polarizable dynamical systems $(X,\Phi)$.

Next we discuss the connections between our results and the Dynamical Manin-Mumford conjecture. The original form of the Dynamical Manin-Mumford conjecture, formulated by Zhang in early 1990s (see \cite{Zhang}) predicts that for a polarizable dynamical system $(X,\Phi)$ defined over a field of characteristic $0$, if $V\subset X$ is a subvariety which contains a Zariski dense set of preperiodic points, then $V$ must be preperiodic itself. Later (see \cite{GTZ}), this conjecture was ammended; however, it is expected that a variant of the Dyanmical Manin-Mumford Conjecture holds for more general dynamical systems that are not necessarily polarizable (see \cite{DF, GNY-1, GNY-2} for similar results). So, asssume that for the endomorphism $\Phi$ of a a quasiprojective variety $X$, the aforementioned Dynamical Manin-Mumford Conjecture holds; we claim that this yields that a variant of the conclusions from Theorems~\ref{thm:endomorphisms}~and~\ref{thm:translations} must hold. Indeed, assume there is a proper subvariety $Y\subset X$ which contains some iterate of each periodic point of $X$. Without loss of generality, we may assume $Y$ is the Zariski closure of a set of periodic points; then the Dynamical Manin-Mumford Conjecture yields that $Y$ must be preperiodic and therefore, its orbit under $\Phi$ is a closed, proper, subvariety $Z$ of $X$. Then our assumption on $Y$ yields that $Z$ must contain \emph{all} periodic points of $X$.

Our Theorems~\ref{thm:endomorphisms} and \ref{thm:translations} (and especially,  their proofs) are also motivated by the special case of the Medvedev-Scanlon conjecture for regular self-maps on semiabelian varieties. The Medvedev-Scanlon conjecture from \cite{MS-Annals} is based on a much earlier conjecture of Zhang from the early 1990s stated in \cite{Zhang}; the Medvedev-Scanlon conjecture  predicts that for a dominant rational self-map $\Phi$ of a quasiprojective variety $X$ defined  over an algebraically closed field $K$ of characteristic $0$, either there exists a point $x\in X(K)$ with a Zariski dense orbit, or there exists a non-constant rational function $f:X\dashrightarrow \bP^1$ such that $f\circ \Phi=f$. This conjecture is known to hold when $K$ is uncountable (see \cite{A-C, dynamical Rosenlicht}), but it is very difficult in the case of countable fields (for various results in this case, see \cite{MS-Annals, 3-folds, GH, GS-abelian, GS-semiabelian, GX}). Similar to  the proof of the Medvedev-Scanlon conjecture when $X$ is a semiabelian variety (see \cite{GS-semiabelian}, which extends the results of \cite{GS-abelian}), in our arguments deriving Theorem~\ref{thm:translations} we reduce to the special case $\Phi$ is a composition of a translation with a unipotent endomorphism (this last case bears resemblance also to the study of wild automorphisms for abelian varieties from \cite{wild automorphism}). In Lemma~\ref{lem:red Jor} we prove that given a dynamical system $(X,\Phi)$, if there exists a proper subvariety $Y\subset X$ containing an iterate of each proper, irreducible, periodic subvariety of $X$, then there must exist an algebraic point of $X$ with a Zariski dense orbit under $\Phi$. Our results allow us to simplify a characterization of Jordan for the primitivity of skew Laurent polynomial algebras over affine commutative domains (see Proposition~\ref{rem:Jordan}).


\section{Proofs of Theorems~\ref{thm:endomorphisms} and \ref{thm:translations}}
\label{sec:proofs}

We start by proving some useful reductions for the proof of  Theorem~\ref{thm:endomorphisms}; similar statements hold also for Theorem~\ref{thm:translations}. The first statement is a simple, but useful observation, which we will use throughout the entire proof of Theorem~\ref{thm:endomorphisms}. We also note that throughout this section, our semiabelian variety $G$ is always assumed to be defined over an algebraically closed field $K$ of characteristic $0$.

\begin{lemma}
\label{lem:torsion-periodic}
Let $G$ be a semiabelian variety endowed with a group endomorphism $\Phi$. 
\begin{enumerate}
\item[(a)] Then the orbit of each periodic, torsion point of $G$ contains only torsion points.
\item[(b)] Each torsion point is preperiodic under the action of $\Phi$.
\item[(c)] Then $Y\subset G$ intersects the orbit of each periodic, torsion point if and only if $Y$ intersects the orbit of each torsion point.
\end{enumerate}
\end{lemma}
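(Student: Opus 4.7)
The three parts are all variations on the single theme that a group endomorphism of a semiabelian variety preserves the torsion subgroup, and that in characteristic zero each $N$-torsion subgroup is finite. I will treat them in order and note that essentially no subtlety enters.

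For part (a), I will use the fact that $\Phi$ is a group endomorphism, so $\Phi([N]x)=[N]\Phi(x)$ for every $x\in G(K)$ and every positive integer $N$. In particular, if $x$ has order dividing $N$, so does $\Phi(x)$, and therefore the entire forward orbit $\{\Phi^n(x):n\ge 0\}$ of a torsion point consists of torsion points. The periodicity hypothesis is not even used here, but it will be used in the applications of the lemma.

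For part (b), I will fix a torsion point $x\in G(K)$, choose $N\ge 1$ with $[N]x=0$, and recall that since $G$ is a semiabelian variety in characteristic $0$, the kernel $G[N]$ of multiplication by $N$ is a finite group (for an extension $0\to T\to G\to A\to 0$ of an abelian variety by a torus, $|G[N]|=N^{r+2g}$ where $r=\dim T$ and $g=\dim A$). By part (a), $\Phi$ restricts to a self-map of the finite set $G[N]$, so the orbit of $x$ under $\Phi$ is contained in $G[N]$ and is therefore finite; hence $x$ is preperiodic.

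For part (c), the forward direction is trivial since the collection of periodic torsion points is contained in the collection of all torsion points. For the converse, assume that $Y$ meets every orbit of a periodic torsion point, and let $x$ be an arbitrary torsion point. By part (b), there exist integers $m\ge 0$ and $k\ge 1$ with $\Phi^{m+k}(x)=\Phi^m(x)$; set $y:=\Phi^m(x)$. Then $y$ is periodic under $\Phi$ and, by part (a), it is still torsion. By hypothesis $Y$ meets the forward orbit of $y$, and since
\[
\{\Phi^n(y):n\ge 0\}=\{\Phi^{m+n}(x):n\ge 0\}\subseteq\{\Phi^n(x):n\ge 0\},
\]
we conclude that $Y$ meets the forward orbit of $x$ as well.

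I do not expect any real obstacle; the only non-trivial input is the finiteness of $G[N]$ in characteristic zero, which is standard. The main role of the lemma, as I read it, is to let us pass freely between ``intersecting every orbit of a periodic torsion point'' and ``intersecting every orbit of a torsion point'' in the proof of Theorem~\ref{thm:endomorphisms}.
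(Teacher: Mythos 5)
Your proof is correct and follows essentially the same route as the paper's: observe that $\Phi$ commutes with multiplication-by-$N$ so that $\Phi$ stabilizes the finite set $G[N]$ (giving (a) and (b)), and then deduce (c) from (b). The paper states this more tersely, but there is no substantive difference.
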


\begin{proof}
The proof is immediate, by noting that $\Phi$ commutes with the multiplication-by-$m$ map (always denoted by $[m]$) on $G$, for any integer $m$. In particular, letting $G[m]$ be the (finite) set of torsion points of $G$ killed by $[m]$, we see that $\Phi(G[m])\subseteq G[m]$ (which justifies both (a) and (b)). Finally, part~(c) follows from part~(b).
\end{proof}

Our next observation is also simple and useful.
\begin{lemma}
\label{lem:iterate}
The conclusion of Theorem~\ref{thm:endomorphisms} is unchanged if we replace the dynamical system $(G,\Phi)$ by the dynamical system $(G,\Phi^\ell)$ for some positive integer $\ell$.
\end{lemma}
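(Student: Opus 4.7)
The plan is to prove the equivalence in both directions, using the elementary observation that $\Per(\Phi) = \Per(\Phi^\ell)$ (a point has finite $\Phi$-orbit iff it has finite $\Phi^\ell$-orbit). By Lemma~\ref{lem:torsion-periodic}(c), I can restrict attention throughout to orbits of periodic torsion points.

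First I would handle the direction that replacing $\Phi$ by $\Phi^\ell$ cannot weaken the conclusion. Suppose the conclusion fails for $(G,\Phi^\ell)$, so there is a proper subvariety $Y \subset G$ meeting the $\Phi^\ell$-orbit of every $\Phi^\ell$-periodic torsion point. Any $\Phi$-periodic torsion point $x$ is in particular $\Phi^\ell$-periodic, and its $\Phi^\ell$-orbit $\{\Phi^{m\ell}(x) : m \geq 0\}$ sits inside its $\Phi$-orbit. So the same $Y$ meets the $\Phi$-orbit of $x$, contradicting the conclusion for $(G,\Phi)$.

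For the reverse direction, suppose $Y \subset G$ is a proper subvariety meeting the $\Phi$-orbit of every $\Phi$-periodic torsion point; I would then take
\[
Y' := \bigcup_{i=0}^{\ell-1} \Phi^{-i}(Y).
\]
Since $\Phi$ is a dominant group endomorphism of the connected algebraic group $G$, its image $\Phi(G)$ is both a closed subgroup and dense in $G$, hence equal to $G$; a dimension count then forces $\Phi$ to have finite fibers, so it is an isogeny. Consequently each $\Phi^{-i}(Y)$ has the same dimension as $Y$ and is therefore a proper subvariety, and the finite union $Y'$ is proper as well. Now given any $\Phi^\ell$-periodic torsion point $x$, it is also $\Phi$-periodic, so there is some $m \geq 0$ with $\Phi^m(x) \in Y$; writing $m = q\ell + r$ with $0 \leq r < \ell$ yields $\Phi^{q\ell}(x) \in \Phi^{-r}(Y) \subseteq Y'$. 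Thus $Y'$ meets the $\Phi^\ell$-orbit of $x$, refuting the conclusion for $(G,\Phi^\ell)$.

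The only point that is not purely formal is the propriety of $Y'$; this is where the assumption that $G$ is a semiabelian variety and $\Phi$ is dominant actually enters, guaranteeing that the pullbacks $\Phi^{-i}(Y)$ do not accidentally cover all of $G$. Everything else reduces to indexing orbits by the division algorithm modulo $\ell$.
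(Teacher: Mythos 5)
Your proof is correct and takes essentially the same approach as the paper: the forward direction uses that periodic points coincide for $\Phi$ and $\Phi^\ell$ together with the inclusion of $\Phi^\ell$-orbits in $\Phi$-orbits, and the reverse direction saturates $Y$ over residues modulo $\ell$. The only cosmetic difference is that you take $Y' = \bigcup_{i=0}^{\ell-1}\Phi^{-i}(Y)$ where the paper takes $\bigcup_{i=0}^{\ell-1}\Phi^{i}(Y)$; both work since $\Phi$ is an isogeny, and your explicit justification that each piece (and hence the finite union) remains proper is a welcome elaboration of what the paper leaves implicit.
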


\begin{proof}
First of all, it is clear that the two dynamical systems share the same set of periodic points; therefore, the conclusion of Theorem~\ref{thm:endomorphisms} for $(G,\Phi)$ yields the same conclusion for $(G,\Phi^\ell)$. Conversely, if there were some proper subvariety $Y\subset G$ containing some iterate of each periodic point of $G$ under the action of $\Phi$, then $\cup_{i=0}^{\ell-1}\Phi^i(Y)$ is also a proper subvariety of $G$, which would then contain some iterate of each periodic point under the action of $\Phi^\ell$.
\end{proof}

The next result shows that we can always replace the dynamical system $(G,\Phi)$ by an isogenous copy of it.

\begin{lemma}
\label{claim:red 1 abelian}
Theorem~\ref{thm:endomorphisms} is invariant if we replace $G$ by an isogenous semiabelian variety $G'$. 
\end{lemma}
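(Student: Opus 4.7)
The plan is to construct, from the isogeny between $G$ and $G'$, a common isogenous quotient $\tilde G$ equipped with an endomorphism $\tilde\Phi$ to which $\Phi$ descends, and to show that the conclusion of Theorem~\ref{thm:endomorphisms} transfers back and forth between $(G,\Phi)$ and $(\tilde G,\tilde\Phi)$ via the quotient map; since ``isogenous to $G$'' and ``isogenous to $G'$'' define the same class, this will establish the desired invariance. Concretely, fix an isogeny $\pi\colon G\to G'$, let $N:=\ker\pi$, and let $m$ be the exponent of $N$ so that $N\subseteq G[m]$. Since $\Phi$ commutes with $[m]$, the finite group $G[m]$ is $\Phi$-stable, and hence
\[
\tilde N := N+\Phi(N)+\Phi^2(N)+\cdots
\]
is a finite, $\Phi$-invariant subgroup of $G[m]$ containing $N$. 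Setting $\tilde G:=G/\tilde N$ with quotient isogeny $\tilde\pi\colon G\twoheadrightarrow\tilde G$, the $\Phi$-invariance of $\tilde N$ forces $\Phi$ to descend to an endomorphism $\tilde\Phi\colon\tilde G\to\tilde G$; since $N\subseteq\tilde N$, the variety $\tilde G=G'/(\tilde N/N)$ is isogenous to $G'$ (as well as to $G$).

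Next I would transfer the conclusion of Theorem~\ref{thm:endomorphisms} across $\tilde\pi$ in both directions, using that $\tilde\pi$ is finite and surjective, intertwines $\Phi$ with $\tilde\Phi$, and carries torsion to torsion. Given a proper subvariety $Y\subset G$ meeting every orbit of a periodic torsion point of $G$, the closure $\overline{\tilde\pi(Y)}$ is a proper subvariety of $\tilde G$; for any periodic torsion point $\tilde P\in\tilde G$, pick a lift $P\in G$, which is still torsion (because $\tilde\pi^{-1}(\tilde P)$ is a coset of the finite group $\tilde N$) and hence preperiodic by Lemma~\ref{lem:torsion-periodic}(b), so some $\Phi^k(P)$ is periodic torsion in $G$; by hypothesis some iterate of $\Phi^k(P)$ lies in $Y$, and its image under $\tilde\pi$ sits in the orbit of $\tilde P$ inside $\overline{\tilde\pi(Y)}$. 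Conversely, given a proper $\tilde Y\subset\tilde G$ meeting every periodic torsion orbit in $\tilde G$, the preimage $\tilde\pi^{-1}(\tilde Y)$ is proper in $G$ (as $\tilde\pi$ is finite), and since every periodic torsion point of $G$ pushes forward to a periodic torsion point of $\tilde G$, its orbit meets $\tilde Y$ after some iterate, pulling back to $\tilde\pi^{-1}(\tilde Y)$ in the orbit of the original point.

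The main obstacle is the descent step itself: $\Phi$ need not preserve $\ker\pi$, so it does not descend directly to $G'$. Replacing $N$ by its finite $\Phi$-invariant hull $\tilde N$ circumvents this by passing to a further isogenous quotient $\tilde G$, after which the remaining verification is a routine transfer of the ``proper subvariety meeting every periodic torsion orbit'' property across the finite group-theoretic quotient $\tilde\pi$, with transitivity of isogeny handling the bookkeeping needed to land back at the originally chosen $G'$.
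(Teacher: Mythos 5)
Your hull construction and the transfer step across $\tilde\pi\colon G\to\tilde G=G/\tilde N$ are fine as far as they go, but the argument does not close. You establish that the conclusion of Theorem~\ref{thm:endomorphisms} passes between the specific pairs $(G,\Phi)$ and $(\tilde G,\tilde\Phi)$, where $\tilde G$ is a $\Phi$-\emph{dependent} quotient of $G$. The lemma, however, asks you to relate $G$ to the originally given $G'$, quantified over all dominant group endomorphisms. Since $\Phi$ need not preserve $\ker\pi$, the induced map $\tilde\Phi$ on $\tilde G$ is not the descent of any endomorphism of $G'$; hence knowing Theorem~\ref{thm:endomorphisms} for every endomorphism of $G'$ tells you nothing about $(\tilde G,\tilde\Phi)$, and the chain $G\to G'\to\tilde G$ cannot be traversed dynamically. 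The closing appeal to ``isogenous to $G$'' and ``isogenous to $G'$'' defining the same class is circular: transitivity of the isogeny relation is trivial, but transitivity of the invariance you want is precisely the content of the lemma.

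The missing idea is to modify the \emph{endomorphism} rather than enlarge the kernel. The paper sets $\Phi_m:=\Phi\circ[m]$ with $m$ chosen so that $\ker\tau\subseteq G[m]$; then $\Phi_m$ kills $\ker\tau$ and descends directly to a dominant group endomorphism $\Phi_m'$ of the original $G'$ with no need for a hull, and the intertwining $\tau\circ\Phi_m=\Phi_m'\circ\tau$ gives (by exactly the kind of transfer you wrote) the equivalence of the conclusion for $(G,\Phi_m)$ and $(G',\Phi_m')$. The genuinely nontrivial step, absent from your proposal, is that the conclusion for $(G,\Phi_m)$ implies the conclusion for $(G,\Phi)$. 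There one assumes a bad proper $Y\subset G$ for $(G,\Phi)$ is the Zariski closure of a set of torsion points, invokes Laurent's theorem to write $Y$ as a finite union of torsion translates of proper algebraic subgroups, and observes that $Y_m:=\bigcup_{k\ge 1}[m^k](Y)$ is then still a proper closed subset (the images $[m^k](Y)$ range over only finitely many such translates); since $\Phi_m^n=[m^n]\circ\Phi^n$, the set $Y_m$ meets the $\Phi_m$-orbit of every torsion point, contradicting the conclusion for $(G,\Phi_m)$. Without some form of this step your argument remains incomplete.
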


\begin{proof}
Let $\tau:G\lra G'$ be an isogeny, and assume Theorem~\ref{thm:endomorphisms} holds for any dominant  group endomorphism of $G'$; we show next that it must hold also for any dominant group endomorphism of $G$.

Let $\Phi$ be a group endomorphism of $G$, and let $m$ be a positive integer such that $\ker(\tau)\subset G[m]$. We let $\Phi_m:=\Phi\circ [m]$ and note that $\Phi_m':=\tau\circ \Phi_m\circ\tau^{-1}$ is a well-defined, dominant group endomorphism of $G'$. Since $\tau\circ \Phi_m=\Phi_m'\circ \tau$, we obtain that Theorem~\ref{thm:endomorphisms} holds for the dynamical system $(G,\Phi_m)$ if and only if it holds for the dynamical system $(G', \Phi_m')$; note that if there were a proper subvariety $Y\subset G$ intersecting the orbit of each torsion point of $G$ (see Lemma~\ref{lem:torsion-periodic}), then $\tau(Y)$ is a proper subvariety of $G'$ intersecting the orbit of each  torsion point of $G'$ (and also the converse holds since $\tau$ is a finite map). Therefore, it remains to prove that if Theorem~\ref{thm:endomorphisms} holds for $(G,\Phi_m)$, then it must hold also for $(G,\Phi)$.

Assume that Theorem~\ref{thm:endomorphisms} fails for the dynamical system $(G,\Phi)$; then there exists some proper subvariety $Y\subset G$ intersecting the orbit of each torsion point of $G$ (see Lemma~\ref{lem:torsion-periodic}~(c)).  Without loss of generality we may assume $Y$ is the Zariski closure of a subset of torsion points (see Lemma~\ref{lem:torsion-periodic}~(a)). Thus, by Laurent's theorem \cite{Laurent}, we get that $Y$ is a finite union of torsion translates of proper algebraic subgroups of $G$. In particular, this yields that 
$Y_m:=\bigcup_{k=1}^\infty [m^k](Y)$ (i.e., the union of the image of $Y$ under the multiplication-by-$m^k$ morphisms, as we let $k$ vary) is also a proper, Zariski closed subset of $G$. Furthermore, by construction (and also using the hypothesis regarding $Y$), we get that $Y_m$ intersects the orbit of each  torsion point of $G$ under the action of $\Phi_m$. This contradicts the fact that conclusion of Theorem~\ref{thm:endomorphisms} must hold for the dynamical system $(G,\Phi_m)$, thus proving the desired reduction from Lemma~\ref{claim:red 1 abelian}. 
\end{proof}

The next result shows in particular that proving Theorem~\ref{thm:endomorphisms} reduces to deriving the desired conclusion in the case of abelian varieties and respectively, in the case of algebraic tori (note that each semiabelian variety is an extension of an abelian variety by a torus).

\begin{lemma}
\label{claim:red 3 semiabelian}
Let $G$ be a semiabelian variety endowed with a dominant group endomorphism $\Phi$ and assume there exists a short exact sequence of semiabelian varieties:
\begin{equation}
\label{eq:short exact 0}
1\lra G_1\lra G\lra G_2\lra 1
\end{equation}
such that $\Phi|_{G_1}$ is an endomorphism of $G_1$, while $\bar{\Phi}$ is the corresponding induced endomorphism of $G_2$. If the conclusion in Theorem~\ref{thm:endomorphisms} holds for the dynamical systems $\left(G_1,\Phi|_{G_1}\right)$ and $(G_2,\bar{\Phi})$, then the same conclusion from Theorem~\ref{thm:endomorphisms} holds for the dynamical system $(G,\Phi)$.
\end{lemma}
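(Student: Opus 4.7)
The plan is to argue by contradiction: assume that $Y\subsetneq G$ is a proper subvariety meeting the $\Phi$-orbit of every periodic torsion point of $G$---equivalently, by Lemma~\ref{lem:torsion-periodic}, of every torsion point of $G$. I will descend to $G_2$, restrict to $G_1$, and then combine the two hypotheses via a ``fiber translation'' trick to produce a proper subvariety of $G_1$ meeting every torsion orbit of $G_1$, contradicting the hypothesis on $(G_1,\Phi|_{G_1})$.

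Two easy reductions come first. Using divisibility of the semiabelian variety $G_1$, every torsion point of $G_2$ lifts to a torsion point of $G$: given $\bar x\in G_2$ with $[n]\bar x=0$ and any lift $y\in G$, the element $[n]y$ lies in $G_1$ and has an $n$-th root $w\in G_1$, so $y-w$ is a torsion lift. Hence if $Z:=\overline{\pi(Y)}$ were proper in $G_2$, the orbit of a torsion lift of each torsion $\bar x\in G_2$ would meet $Y$, forcing the orbit of $\bar x$ to meet $Z$---violating the hypothesis on $(G_2,\bar\Phi)$. Thus $\pi(Y)$ is Zariski dense in $G_2$. Similarly, for torsion $z\in G_1$ the orbit $\{\Phi^i(z)\}_{i\ge 0}$ stays in $G_1$ and meets $Y$, so it meets $Y\cap G_1$; the hypothesis on $(G_1,\Phi|_{G_1})$ forces $Y\cap G_1=G_1$, i.e.\ $G_1\subseteq Y$.

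The crux is to leverage these two facts together. Consider
\[
D:=\{\bar g\in G_2:\pi^{-1}(\bar g)\subseteq Y\}.
\]
Since $\pi:G\to G_2$ is smooth and therefore open, $\pi(G\setminus Y)$ is open in $G_2$, so $D$ is closed; and $D\subsetneq G_2$, as otherwise $Y=G$. Applying the hypothesis on $(G_2,\bar\Phi)$ to $D$, there is a torsion $\bar x\in G_2$ whose entire $\bar\Phi$-orbit avoids $D$. Lift $\bar x$ to a torsion $x_0\in G$; then for every $j\ge 0$ the fiber $\Phi^j(x_0)+G_1$ is not contained in $Y$, so $(Y-\Phi^j(x_0))\cap G_1$ is a proper subvariety of $G_1$. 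Because $x_0$ is torsion, $F:=\{\Phi^j(x_0):j\ge 0\}$ is a finite set, and hence
\[
\widetilde W:=\bigcup_{f\in F}\bigl((Y-f)\cap G_1\bigr)
\]
is a finite union of proper subvarieties of $G_1$, hence itself proper in $G_1$. For any torsion $z\in G_1$, the element $x_0+z\in G$ is torsion, so some iterate $\Phi^{j}(x_0+z)$ lies in $Y$, giving $\Phi^{j}(z)\in (Y-\Phi^{j}(x_0))\cap G_1\subseteq\widetilde W$. Thus $\widetilde W$ meets the $\Phi|_{G_1}$-orbit of every torsion point of $G_1$, contradicting the hypothesis on $(G_1,\Phi|_{G_1})$.

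The main obstacle is precisely this last step: once $G_1\subseteq Y$, the slice $Y\cap G_1$ carries no information, so a proper subvariety of $G_1$ must be extracted indirectly. The key idea is to use the hypothesis on $(G_2,\bar\Phi)$ a \emph{second} time---now applied to $D$---to locate an entire $\bar\Phi$-orbit whose fibers in $G$ are genuinely cut down by $Y$, and then exploit preperiodicity of the torsion lift $x_0$ to collapse the a priori infinite family of fiber-slices $(Y-\Phi^j(x_0))\cap G_1$ into a single proper subvariety of $G_1$.
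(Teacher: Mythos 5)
Your argument is correct, and it is a genuinely different route from the paper's. The paper proves the lemma ``from the inside out'': fixing a periodic torsion $x\in G$ and a periodic torsion $y\in G_1$, it uses that $x+y$ is a periodic torsion point whose orbit meets $Y$, then applies a pigeonhole argument on the residue of $n_y$ modulo the period $N(x)$ to extract a single iterate $N_0(x)$ with $\{\Phi^{n_y}(y):n_y\equiv N_0(x)\pmod{N(x)}\}$ still dense in $G_1$; this forces the whole coset $\Phi^{N_0(x)}(x)+G_1$ into $Y$, and then density of $\{\overline{\Phi^{N_0(x)}(x)}\}_x$ in $G_2$ gives $Y=G$. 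You instead work ``from the outside in'': you form the closed locus $D=\{\bar g\in G_2:\pi^{-1}(\bar g)\subseteq Y\}$, apply the hypothesis on $(G_2,\bar\Phi)$ to $D$ to find a torsion point $\bar x$ whose whole $\bar\Phi$-orbit avoids $D$, lift it to a torsion $x_0\in G$ (so the forward orbit $F=\{\Phi^j(x_0)\}_{j\ge0}\subseteq G[\mathrm{ord}(x_0)]$ is finite), and assemble $\widetilde W=\bigcup_{f\in F}(Y-f)\cap G_1$ as a \emph{finite} union of proper closed subsets of $G_1$ that still meets every torsion orbit in $G_1$, contradicting the hypothesis on $(G_1,\Phi|_{G_1})$. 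Your version replaces the pigeonhole-on-periods step with the cleaner observation that a torsion point has a finite forward orbit, and it requires only one application of each hypothesis rather than a simultaneous density count; it also requires checking that $\pi$ is open so $D$ is closed, which holds since $\pi$ is a smooth surjection. One small remark: your two preliminary facts ($\pi(Y)$ dense in $G_2$ and $G_1\subseteq Y$) are never actually invoked in the final contradiction, so they can be cut; they serve only as motivation for why a more indirect slicing via $D$ is needed.
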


\begin{proof}
Let $Y\subseteq G$ be a subvariety which contains a point from the orbit of each periodic torsion point of $G$ under the action of $\Phi$. 
For each point $x\in G$, we let $\bar{x}\in G_2$ be its image under the morphism $G\lra G_2$ from \eqref{eq:short exact 0}. 

Let $x\in G$ be a torsion point that is periodic under the action of $\Phi$; let $N(x)\in\N$ be the length of the period of $x$. Then for any torsion point $y\in G_1$ that is periodic under the action of $\Phi|_{G_1}$, we have that also $x+y$ is both torsion and periodic under the action of $\Phi$ (since $\Phi$ is a group endomorphism). Our hypothesis regarding $Y$ yields that for each such point $y\in G_1$, we have that there exists some positive integer $n_y$ such that $\Phi^{n_y}(x)+\Phi^{n_y}(y)\in Y$. Our hypothesis regarding the dynamical system $(G_1,\Phi|_{G_1})$ yields that the set $\left\{\Phi^{n_y}(y)\right\}_y$ is Zariski dense in $G_1$. Thus, there exists some positive integer $N_0(x)\le N(x)$ with the property that the set
$$\left\{\Phi^{n_y}(y)\colon n_y\equiv N_0(x)\pmod{N(x)}\right\}$$
is Zariski dense in $G_1$. So, $\Phi^{N_0(x)}(x)+G_1\subset Y$ for each torsion point $x$ that is periodic under the action of $\Phi$. Using the fact that the points $\{\overline{\Phi^{N_0(x)}(x)}\}_x$ are Zariski dense in $G_2$ (according to the hypothesis applied to the dynamical system $(G_2,\bar{\Phi})$), we conclude that $Y=G$, as desired.  
\end{proof}

A special case of Lemma~\ref{claim:red 3 semiabelian} yields the following statement. 
\begin{cor}
\label{claim:red 2 abelian}
If the conclusion in our Theorem~\ref{thm:endomorphisms} holds for the semiabelian varieties $G_1$ and $G_2$ equipped with dominant group endomorphisms $\Phi_1$, respectively $\Phi_2$, then the same conclusion holds for the dynamical system $(G_1\times G_2, \Phi)$, where $\Phi$ is the endomorphism of $G:=G_1\times G_2$ defined by $\Phi(x_1,x_2)=(\Phi_1(x_1),\Phi_2(x_2))$. 
\end{cor}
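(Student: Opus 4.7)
The plan is to deduce this corollary immediately from Lemma~\ref{claim:red 3 semiabelian} by exhibiting the product $G_1 \times G_2$ as the middle term of the obvious split short exact sequence of semiabelian varieties
\begin{equation*}
1 \lra G_1 \lra G_1 \times G_2 \lra G_2 \lra 1,
\end{equation*}
where the first map is $x_1 \mapsto (x_1, 0)$ and the second is the projection $(x_1, x_2) \mapsto x_2$. The key check is that the product endomorphism $\Phi$ is compatible with this sequence in the sense required by Lemma~\ref{claim:red 3 semiabelian}.

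Concretely, I would verify that $\Phi$ preserves the image of $G_1$ in $G_1 \times G_2$, since $\Phi(x_1, 0) = (\Phi_1(x_1), \Phi_2(0)) = (\Phi_1(x_1), 0)$; thus $\Phi|_{G_1} = \Phi_1$ under the identification $G_1 \cong G_1 \times \{0\}$. Similarly, the induced endomorphism $\bar{\Phi}$ on the quotient $G_2$ is just $\Phi_2$, because $\Phi$ acts on the second coordinate via $\Phi_2$. Dominance of $\Phi_1$ and $\Phi_2$ is given by hypothesis, so all the conditions of Lemma~\ref{claim:red 3 semiabelian} are met, and the hypothesized conclusion for $(G_1, \Phi_1)$ and $(G_2, \Phi_2)$ transfers to $(G_1 \times G_2, \Phi)$.

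There is essentially no obstacle here: the corollary is a direct instance of the lemma, and the only thing to write is the verification that the product structure realizes a short exact sequence satisfying the hypotheses. I would keep the proof to a short paragraph, simply pointing out the split sequence above and remarking that $\Phi$ restricts and descends correctly, so that Lemma~\ref{claim:red 3 semiabelian} applies verbatim.
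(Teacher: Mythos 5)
Your proposal is exactly how the paper handles this: the corollary is stated as a special case of Lemma~\ref{claim:red 3 semiabelian} applied to the split short exact sequence $1\to G_1\to G_1\times G_2\to G_2\to 1$, with the routine check that $\Phi$ restricts to $\Phi_1$ on the subgroup and descends to $\Phi_2$ on the quotient. Correct and matching the paper's route.
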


We continue by proving Theorem~\ref{thm:endomorphisms} for powers of simple semiabelian varieties (i.e., $G$ is either isomorphic to some power of $\bG_m$, or to some power of a simple abelian variety); this special case is instrumental  in deriving the general case in Theorem~\ref{thm:endomorphisms}.

\begin{thm}
\label{thm:endomorphisms simple power}
Theorem~\ref{thm:endomorphisms} holds when $G=A^m$ for a simple semiabelian variety $A$ (i.e., there is no proper semiabelian subvariety of $A$).
\end{thm}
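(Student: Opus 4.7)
Write $R := \End(A)$, an order in the division algebra $D := R\otimes_{\bZ}\bQ$, so that $\End(G)=M_m(R)$; the dominant group endomorphism $\Phi$ is then encoded by a matrix $M\in M_m(R)$ that is invertible in $M_m(D)$. My plan is to induct on $m$, combining the reductions of Lemmas~\ref{lem:iterate}, \ref{claim:red 1 abelian}, and~\ref{claim:red 3 semiabelian} with a number-theoretic argument in the base case.

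For the base case $m=1$ (so $G=A$), I distinguish whether $\Phi$ has finite or infinite order in $R$. If $\Phi^N=\id$ for some $N\ge 1$, then every torsion point is $\Phi$-periodic with orbit of size at most $N$; since $A$-torsion is Zariski dense in $A$ while $\bigcup_{i=0}^{N-1}\Phi^{-i}(Y)$ is a proper closed subset, any torsion point outside this union has orbit disjoint from $Y$. If $\Phi$ has infinite order and $A=\bG_m$, so $\Phi=[a]$ with $|a|\ge 2$, I reduce $Y$ (via Laurent's theorem and Lemma~\ref{lem:torsion-periodic}) to a finite set of roots of unity $\{\zeta_1,\dots,\zeta_r\}$ and then pick a prime $p$ with $p\nmid a\prod_i\ord(\zeta_i)$: a primitive $p$-th root of unity is $\Phi$-periodic with orbit consisting of primitive $p$-th roots of unity, which are disjoint from $Y$. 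For $A$ a simple abelian variety, I extend this prime-density argument after using semisimplicity of $\Phi$ on the Tate module together with Kronecker's theorem (on algebraic integers all of whose conjugates lie on the unit circle) to show that $\Phi^N-\id$ is an isogeny with $|\ker(\Phi^N-\id)|\to\infty$.

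For the inductive step ($m\ge 2$) I examine the $R$-submodule structure of $R^m$ under the action of $M$. If a proper nontrivial $M$-invariant $R$-submodule exists, then after the reductions of Lemmas~\ref{lem:iterate} and~\ref{claim:red 1 abelian} it corresponds to a proper nontrivial $\Phi$-stable sub-semiabelian subvariety $G_1\subsetneq G$, so Lemma~\ref{claim:red 3 semiabelian} applied to $1\to G_1\to G\to G/G_1\to 1$ reduces the problem to strictly smaller $m$. Otherwise $M$ acts $D$-irreducibly on $D^m$, and then $\bQ[M]\subset M_m(D)$ is a proper field extension $K/\bQ$; the $K$-action on $G$ exhibits $G$ (up to isogeny) as a Weil restriction $\Res_{K/\bQ}A'$ of a simple semiabelian variety $A'$ over $K$, with $\Phi$ becoming multiplication by a generator of $K$, bringing the problem back to a base-case situation over $K$.

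The principal obstacle is the infinite-order base case: to produce a periodic torsion orbit missing a pre-specified proper $Y$, one needs both a growth estimate $|\ker(\Phi^N-\id)|\to\infty$ (so that enough periodic torsion points exist) and a constructive device (the prime-density trick for $\bG_m$, generalized via Frobenius/Chebotarev in the abelian case) to guarantee that the chosen orbit lies outside $Y$. A secondary technicality is verifying the Weil-restriction description in the irreducible subcase of the inductive step, and checking that the $K$-action is compatible with the semiabelian structure of $G$.
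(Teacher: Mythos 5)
Your high-level organization — induct on $m$ and split the inductive step according to whether $M$ admits a proper invariant submodule — parallels the paper's reductions (this is essentially Claim~\ref{claim:irreducible} together with Lemma~\ref{claim:red 3 semiabelian}). The gap occurs exactly where the paper does its real work: the ``irreducible'' case, where neither you nor the paper can shrink $m$. You propose that when $\bQ[M]=K$ is a field, $G$ is ``(up to isogeny) a Weil restriction $\Res_{K/\bQ}A'$'' and $\Phi$ becomes multiplication by a generator of $K$, ``bringing the problem back to a base-case situation over $K$.'' This is not a reduction. $G$ lives over an algebraically closed field; endowing it with a $K$-action via the commutant does not exhibit it as a Weil restriction (over an algebraically closed field, Weil restriction from a number field would simply produce a product of Galois conjugates), and there is no $1$-dimensional object over $K$ to fall back on. Saying ``$G$ is a rank-one $\OO_K$-module and $\Phi=\alpha\in K$'' merely restates the original problem in different notation; one still has to show that a proper $Y$ cannot meet every periodic torsion orbit, and your sketch gives no mechanism for doing so.

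What the paper supplies in the irreducible case is a $p$-torsion counting argument, absent from your proposal. After Laurent's theorem reduces $Y$ to finitely many torsion translates cut out by endomorphism relations $[\overrightarrow{\alpha_i}]\cdot x=0$, one chooses primes $p$ splitting completely in the splitting field of the (irreducible) minimal polynomial $f$ of $\Phi$; then $f(t)\mid t^{p-1}-1\pmod p$, hence $\Phi^{p-1}=\id$ on $G[p]$ and every point of $G[p]$ is periodic with period dividing $p-1$. The count (Lemmas~\ref{lem:endomorphism}–\ref{lem:sum coprime}, Lemma~\ref{lem:counting i}) shows at most $k(p-1)p^{2g(m-1)}<kp^{2gm-1}$ points of $G[p]$ can have some iterate landing in $Y$, which is $<p^{2gm}=|G[p]|$ once $p>k$; contradiction. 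The torus case is a parallel argument via eigenvectors mod $p$ and a pigeonhole on the defining equation, with the span $U$ of $(M_\Phi^t)^n w$ deciding between an outright contradiction and a proper $\Phi$-invariant subtorus to induct on. Nothing in your proposal plays the role of this counting step. A secondary remark: your base case for simple abelian varieties is over-engineered — you don't need Kronecker's theorem or a growth estimate on $|\ker(\Phi^N-\id)|$. For $m=1$, Laurent makes $Y$ finite, so only finitely many periodic orbits can pass through $Y$, while $\Phi$ permutes $A[p]$ for all but finitely many $p$, producing infinitely many periodic torsion points; one of them has orbit disjoint from $Y$.
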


We split our proof of Theorem~\ref{thm:endomorphisms simple power} based on whether $A$ is isomorphic to the multiplicative group $\bG_m$, or $A$ is a simple  abelian variety.

\begin{proof}[Proof of Theorem~\ref{thm:endomorphisms simple power} when $A\isomto \bG_m$.]
Suppose there exists a proper subvariety $Y$ of $G$ (which we assume to be the $m$-th cartesian power of $\bG_m$) intersecting the orbit of each torsion point of $G$ that is periodic under the action of $\Phi$. In particular, we may assume $Y\subset G$ is the Zariski closure of a set of torsion points and thus, by Laurent's theorem \cite{Laurent}, we have that $Y$ is a finite union of torsion translates of algebraic subgroups of $G$.

We argue by induction on $m$; the case $m=1$ is obvious since there exist infinitely many torsion points of $\bG_m$ (which are the usual roots of unity in  $K$).

Because $Y$ is a finite union of torsion translates of algebraic subgroups of $A^m$, then there must exist finitely many $m$-tuples $(\alpha_{i,1},\dots,\alpha_{i,m})\in \Z^m\setminus\{(0,\dots,0)\}$ (for some $i=1,\dots, k$) such that each point $(x_1,\dots, x_m)\in G$ of $Y$ satisfies an equation of the form:
\begin{equation}
\label{eq:i}
\prod_{j=1}^m x_j^{\alpha_{i,j}}=1,
\end{equation}
for some $i=1,\dots, k$.

We let $M_\Phi$ be the $m$-by-$m$ matrix with integer entries  which corresponds to the endomorphism $\Phi$ of $G$. Since $\Phi$ is dominant, we have that all eigenvalues of $M_\Phi$ are nonzero. We let $L$ be a finite Galois extension of $\Q$ containing all the eigenvalues of $M_\Phi$ and also, containing  the entries of all the eigenvectors of $M_\Phi$. Then  there exists an infinite set $S$ of primes $p$ satisfying the following properties: 
\begin{itemize} 
\item[(i)] $p$ splits completely in $L/\Q$; and 
\item[(ii)] fixing some prime $\fp$ of $L$ lying above $p$, we have that each eigenvalue of $M_\Phi$ is a $\fp$-adic unit. 
\end{itemize}

Fix some $p\in S$. We let $\epsilon_p\in \Qbar$ be a root of unity of order precisely $p$. Our hypotheses~(i)--(ii) yield the existence of a vector $v:=(c_1,\dots, c_m)\in \Z^m$ and also the existence of an integer $b$ coprime with $p$  such that 
\begin{equation}
\label{eq:congruent p}
M_\Phi\cdot v\equiv b\cdot v\pmod{p}.
\end{equation}
Note that the congruence equation \eqref{eq:congruent p} first holds modulo $\fp$, but then since $M_\Phi$ and also the vector $v$ have all their entries integral, then we obtain a congruence modulo $p$. 
We let $\overrightarrow{x_p}:=\left(\epsilon_\fp^{c_1},\cdots, \epsilon_\fp^{c_m}\right)\in G(\Qbar)$. Then \eqref{eq:congruent p} yields that 
\begin{equation}
\label{eq:iteration p}
\Phi(x_p)=\overrightarrow{x_p}^b,
\end{equation}
where for each vector $\overrightarrow{v}:=(v_1,\dots, v_m)\in \Qbar^m$, we let $\overrightarrow{v}^b:=\left(v_1^b,\dots, v_m^b\right)$. Thus $\Phi^j(x_p)=\overrightarrow{x_p}^{b^j}$; since $\gcd(b,p)=1$, we get that $\overrightarrow{x_p}$ is periodic under the action of $\Phi$. Furthermore, our assumption on $Y$ yields the existence of some $j\in\N$ and also the existence of some $i_p\in \{1,\dots,k\}$ such that  
$$\prod_{\ell=1}^m \epsilon_p^{\alpha_{i_p,\ell}c_\ell b^j}=1;$$ 
hence (because $b$ is coprime with $p$, while $\epsilon_p$ has order $p$), we have 
\begin{equation}
\label{eq:congruence p c}
c_1\alpha_{i_p,1}+\cdots + c_m\alpha_{i_p,m}\equiv 0\pmod{p}.
\end{equation}
By the pigeonhole principle, there exists some $\tilde{i}\in\{1,\dots, k\}$ such that $i_p=\tilde{i}$ for infinitely many primes $p\in S$. We let $w:=(\alpha_{\tilde{i},1},\dots, \alpha_{\tilde{i},m})\in \Z^m$. Then, using \eqref{eq:congruence p c} and \eqref{eq:congruent p}, we have that 
\begin{equation}
\label{eq:dot product}
w\cdot M_\Phi^nv\equiv 0\pmod{p},
\end{equation}
for all nonnegative integers $n$ (where $w_1\cdot w_2$ represents the dot product of the two vectors $w_1$ and $w_2$). Note that in equation \eqref{eq:dot product}, the vector $v$ changes with the prime $p$, but the vector $w$ is unchanged.

Consider the $L$-vector space $U$ spanned by all vectors $(M_\Phi^t)^nw$ for $n\ge 0$ (where $M^t$ is the transpose of the matrix $M$). If $\dim U = m$, then equation \eqref{eq:dot product} cannot hold for infinitely many primes $p$ since for large primes $p$, there is no nonzero vector $v$ orthogonal modulo $p$ to each element in $U$.

Thus it must be that $\dim U<m$. In this case, we let $A_1$ be the algebraic subgroup of $G$ corresponding to the orthogonal complement of the linear subspace $U$ of the tangent space of $G$ at the origin; more precisely, $A_1$ is the algebraic subgroup of $G$ consisting of all $x:=(x_1,\dots, x_m)$ such that for each $n$, we have $[w](\Phi^n(x))=0$, where $[w]:G\lra \bG_m$ is given by the following formula (note that $w=(\alpha_{\tilde{i},1},\dots, \alpha_{\tilde{i},m})$):
$$(z_1,\dots, z_m)\mapsto \prod_{j=1}^m z_j^{\alpha_{\tilde{i},j}}.$$ 
Clearly, $A_1$ is fixed by $\Phi$; furthermore, since $\dim(U)<m$, we get that $A_1$ is a proper algebraic subgroup of $A^m$ (also note that not all entries of  $w$ are equal to $0$). Furthermore, an iterate of $\Phi$ restricts to an endomorphism of $A_1^0$, which is the connected component of $A_1$; note that $A_1^0$ is isomorphic to the $m_1$-st cartesian power of $\bG_m$ (for some  $0<m_1<m$). Since the conclusion of Theorem~\ref{thm:endomorphisms} is unchanged if we replace $\Phi$ by an iterate of it (see Lemma~\ref{lem:iterate}), without loss of generality, we assume $\Phi$ induces an endomorphism of $A_1^0$.  Thus we have a short exact sequence of algebraic groups: 
$$1\lra A_1^0\lra G\lra A_2\lra 1,$$
where $A_2$ is isomorphic to $\bG_m^{m_2}$ for some integer $0<m_2<m$. Moreover, $\Phi$ induces also an endomorphism $\bar{\Phi}$ of $A_2$ (because $\Phi$ restricts to an endomorphism of $A_1^0$).  Applying the inductive hypothesis to the dynamical systems $(A_1^0,\Phi|_{A_1^0})$ and $(A_2,\bar{\Phi})$ combined with Lemma~\ref{claim:red 3 semiabelian} yields the desired conclusion in Theorem~\ref{thm:endomorphisms} in the case $G$ is the cartesian power of the multiplicative group.
\end{proof}

\begin{proof}[Proof of Theorem~\ref{thm:endomorphisms simple power} when $A$ is an abelian variety.] 
We know that $A$ is an abelian variety of dimension $g\ge 1$. We start by proving some easy facts regarding endomorphisms of abelian varieties.

\begin{lemma}
\label{lem:endomorphism}
Let $\Psi$ be a dominant group endomorphism of $A^\ell$ (for some positive integer $\ell$). Then for all but finitely many primes $p$, we have that for each $n\in\N$, the endomorphism $\Psi^n$ of $A^\ell$ induces a bijection on $A[p]^\ell$ (where $A[p]$ is the $p$-torsion subset of $A$).
\end{lemma}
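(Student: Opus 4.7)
The plan is to reduce the assertion to the case $n=1$ and then invoke the classical fact that an isogeny of abelian varieties whose degree is coprime to $p$ acts as an automorphism on the $p$-torsion subgroup. Indeed, once $\Psi$ is shown to act bijectively on the finite set $A[p]^\ell = A^\ell[p]$, every iterate $\Psi^n$ automatically does so as well, so there is nothing to prove beyond the $n=1$ step.

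First I would verify that $\Psi$ is an isogeny. Because $\Psi$ is a dominant morphism from the complete variety $A^\ell$ to itself, its image is both closed (by completeness) and Zariski dense, hence equal to $A^\ell$; so $\Psi$ is surjective. Since the source and target share the dimension $g\ell$, every fiber of $\Psi$---in particular $\ker(\Psi)$---is zero dimensional. Thus $\ker(\Psi)$ is a finite subgroup scheme of $A^\ell$ and $\Psi$ is an isogeny of some fixed positive degree $d := \deg(\Psi) = \#\ker(\Psi)$.

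Next, fix any prime $p \nmid d$. Because the ground field has characteristic zero, $\ker(\Psi)$ is étale and may be identified with an abelian group of order $d$; by Lagrange's theorem this group has no element of order $p$. Consequently $\ker(\Psi) \cap A^\ell[p] = 0$, so the restriction
$$\Psi|_{A^\ell[p]} : A^\ell[p] \longrightarrow A^\ell[p]$$
is injective. Injectivity on a finite set forces bijectivity, so $\Psi$, and hence every $\Psi^n$, acts as a permutation of $A[p]^\ell$. Since only finitely many primes divide $d$, this yields the lemma. There is no genuine obstacle here: the argument is entirely routine once one confirms that a dominant group endomorphism of an abelian variety is automatically an isogeny, which is the only place the completeness of $A^\ell$ and the characteristic zero hypothesis enter.
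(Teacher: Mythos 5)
Your proof is correct, and it takes a genuinely different route from the paper's. You show that a dominant group endomorphism of $A^\ell$ is an isogeny of some degree $d$, note that in characteristic zero $\ker(\Psi)$ is an \'etale group scheme of order $d$, and then use Lagrange's theorem to conclude $\ker(\Psi)\cap A^\ell[p]=0$ whenever $p\nmid d$; injectivity of the self-map of the finite set $A^\ell[p]$ then gives bijectivity, and the statement for $\Psi^n$ follows immediately. The paper instead takes the monic minimal polynomial $g\in\Z[t]$ with $g(\Psi)=0$, observes that $c:=g(0)$ is a nonzero integer because $\Psi$ is dominant, writes $h(\Psi)\cdot\Psi=[c]_\ell$ for some $h\in\Z[t]$, and deduces bijectivity on $A[p]^\ell$ for every prime $p\nmid c$ from bijectivity of $[c]_\ell$. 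Both arguments exclude only finitely many primes, though in general the excluded sets (divisors of $\deg(\Psi)$ versus divisors of $g(0)$) need not coincide. Your approach is arguably more elementary and geometric, relying only on the degree of the isogeny; the paper's approach is more in keeping with the algebraic machinery it uses elsewhere (the minimal polynomial of $\Psi$ already plays a role in the surrounding proof, e.g.\ Claim~\ref{claim:irreducible}), so the polynomial-based argument is better integrated with the rest of the section, but mathematically both are fine.
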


\begin{proof}[Proof of Lemma~\ref{lem:endomorphism}.]
We let $g\in\Z[t]$ be a monic polynomial of minimal degree, which kills the endomorphism $\Psi$ of $A^\ell$, i.e., $g(\Psi)=0$; for more details, see \cite[Fact~3.3]{GS-abelian} and \cite[Fact~2.6]{GS-semiabelian}. Since $\Psi$ is a dominant endomorphism, we have that $c:=g(0)$ is a nonzero integer. So, there exists a polynomial $h\in \Z[t]$ such that $h(\Psi)\cdot \Psi=[c]_\ell$, where $[c]_\ell$ is the coordinatewise multiplication-by-$c$ morphism on $A^\ell$. Clearly, for each prime $p$, which does not divide $p$, we have that  the endomorphism $[c]_\ell$ of $A^\ell$ induces a bijection on $A[p]^\ell$. In particular, $(h(\Psi)\cdot \Psi)|_{A[p]^\ell}$ is a bijection, which forces $\Psi|_{A[p]^\ell}$ be a bijection on $A[p]^\ell$; then $\Psi^n$, for any positive integer $n$, induces a bijection of $A[p]^\ell$, as claimed.
\end{proof}

\begin{lemma}
\label{lem:sum coprime}
Let $p$ be a prime number and let $\Psi_1,\dots, \Psi_\ell$ be group endomorphisms of $A$ with the property that there exists some $i_1\in\{1,\dots, \ell\}$ such that $\Psi_{i_1}$ induces a bijection on $A[p]$. Then there are exactly $p^{2g(\ell-1)}$ points $(x_1,\dots, x_\ell)\in A[p]^\ell$ such that 
\begin{equation}
\label{eq:psi}
\Psi_1(x_1)+\cdots + \Psi_\ell(x_\ell)=0.
\end{equation}
\end{lemma}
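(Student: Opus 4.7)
The plan is to interpret the equation as the kernel of a homomorphism of finite abelian groups and to compute its cardinality via the first isomorphism theorem.

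First I would observe that since the characteristic of $K$ is $0$ and $A$ is an abelian variety of dimension $g$, the $p$-torsion subgroup $A[p]$ is a free $\mathbb{F}_p$-module of rank $2g$, so $|A[p]|=p^{2g}$ and hence $|A[p]^\ell|=p^{2g\ell}$. Each $\Psi_i$ restricts to a group homomorphism $A[p]\lra A[p]$ (equivalently, an $\mathbb{F}_p$-linear endomorphism of $A[p]$).

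Next I would consider the map
\[
\Phi\colon A[p]^\ell\lra A[p],\qquad (x_1,\dots,x_\ell)\longmapsto \sum_{i=1}^\ell \Psi_i(x_i),
\]
which is a homomorphism of finite abelian groups. The set in the statement is precisely $\ker(\Phi)$, so it suffices to determine $|\ker(\Phi)|$.

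The key step is to show that $\Phi$ is surjective. This is where the hypothesis is used: since $\Psi_{i_1}$ induces a bijection on $A[p]$, for any $y\in A[p]$ the tuple whose $i_1$-th coordinate is $\Psi_{i_1}^{-1}(y)$ and whose other coordinates are $0$ maps to $y$ under $\Phi$. Hence $\Phi$ is surjective, and the first isomorphism theorem yields
\[
|\ker(\Phi)|=\frac{|A[p]^\ell|}{|A[p]|}=\frac{p^{2g\ell}}{p^{2g}}=p^{2g(\ell-1)},
\]
as required. There is no real obstacle here; the whole lemma is a linear-algebra count once one has identified the correct homomorphism and invoked the standard structure of $A[p]$ in characteristic $0$.
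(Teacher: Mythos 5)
Your proof is correct and is essentially the same as the paper's: the paper chooses the $\ell-1$ coordinates other than $x_{i_1}$ freely and uses the bijectivity of $\Psi_{i_1}$ on $A[p]$ to solve uniquely for $x_{i_1}$, which is the direct-counting version of your first-isomorphism-theorem argument. The two presentations differ only in packaging, not in substance.
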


\begin{proof}[Proof of Lemma~\ref{lem:sum coprime}.]
For each $j\in\{1,\dots, \ell\}\setminus\{i_1\}$, we let $x_j$ be an arbitrary point in $A[p]$; this yields precisely $p^{2g(\ell-1)}$ such $(\ell-1)$-tuples. For each such $(\ell-1)$-tuple, there is exactly one solution $x_{i_1}\in A[p]$ such that equation \eqref{eq:psi} is satisfied, as claimed.
\end{proof}

We continue our proof arguing again by induction on $m$, similar to the case $G$ is a power of the multiplicative group; also, as before, the case $m=1$ is obvious since there exist infinitely many torsion points and $A$ has no proper algebraic subgroups because it is a simple abelian variety. We let $D:=\End(A)$.  Furthermore, as before, we assume there exists a proper subvariety $Y$ of $G$ intersecting the orbit of each torsion point of $G$. Moreover, we may assume $Y\subset G$ is the Zariski closure of a set of torsion points and thus, by Laurent's theorem \cite{Laurent}, we have that $Y$ is a finite union of torsion translates of algebraic subgroups of $G$. Thus there must exist finitely many $m$-tuples $(\alpha_{i,1},\dots,\alpha_{i,m})\in D^m\setminus\{(0,\dots,0)\}$ (for some $i=1,\dots, k$) such that each point $(x_1,\dots, x_m)\in A^m$ of $Y$ satisfies an equation of the form:
\begin{equation}
\label{eq:i 222}
[\alpha_{i,1}](x_1)+\cdots +[\alpha_{i,m}](x_m)=0,
\end{equation}
for some $i=1,\dots, k$ (where $[\alpha]$ is the endomorphism of $A$ represented by the element $\alpha\in D$). Also, we let $M_\Phi\in M_{m,m}(D)$ representing the endomorphism $\Phi\in\End(A^m)$.

Let $f\in \Z[t]$ be the minimal, monic polynomial of the endomorphism $\Phi$; i.e., $f(\Phi)=0$. Next we prove that we may assume that $f$ is irreducible.

\begin{claim}
\label{claim:irreducible}
If $f(t)$ is not irreducible, then the conclusion in Theorem~\ref{thm:endomorphisms simple power} follows from the inductive hypothesis. 
\end{claim}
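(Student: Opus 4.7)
The plan is to use a reducible factorization $f=f_1f_2$ in $\Z[t]$ to carve out a proper nontrivial $\Phi$-invariant abelian subvariety of $G=A^m$, and then combine the inductive hypothesis with Lemmas~\ref{claim:red 1 abelian}~and~\ref{claim:red 3 semiabelian}. Since $\Phi$ is a dominant endomorphism of the abelian variety $G$, its kernel is finite, so $f(0)\neq 0$. Because $f$ is monic and reducible, Gauss's lemma lets me write $f=f_1f_2$ with $f_1,f_2\in\Z[t]$ monic and nonconstant; both $f_i(0)$ are divisors of $f(0)$, hence nonzero.

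Set $G_1:=\ker(f_1(\Phi))^0$, the identity component of the kernel. By minimality of $f$, neither $f_1(\Phi)$ nor $f_2(\Phi)$ is zero, so $G_1\subsetneq G$. On the other hand, the equation $f_1(\Phi)\circ f_2(\Phi)=f(\Phi)=0$ shows that the connected image $f_2(\Phi)(G)$ lies in $\ker(f_1(\Phi))$, and hence in $G_1$; since $f_2(\Phi)\neq 0$ this image is a nontrivial abelian subvariety of $G_1$, so $G_1\neq 0$. By Poincaré reducibility, and because $A$ is simple, $G_1$ is isogenous to $A^{m_1}$ and $G_2:=G/G_1$ is isogenous to $A^{m_2}$, with $0<m_1,m_2<m$.

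Next I would verify that $\Phi$ restricts to a dominant endomorphism $\Phi_1$ of $G_1$ and descends to a dominant endomorphism $\bar\Phi$ of $G_2$. The restriction is automatic since $\Phi$ commutes with $f_1(\Phi)$; the minimal polynomial of $\Phi_1$ divides $f_1$, so its constant term divides $f_1(0)\neq 0$, and thus $\Phi_1$ is an isogeny of $G_1$. Similarly, $f_2(\Phi)(G)\subseteq G_1$ implies $f_2(\bar\Phi)=0$ on $G_2$, so the minimal polynomial of $\bar\Phi$ divides $f_2$, whence $\bar\Phi$ is dominant as well.

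Finally, Lemma~\ref{claim:red 1 abelian} allows me to transport the statement of Theorem~\ref{thm:endomorphisms} between $(G_1,\Phi_1)$ and a dynamical system on $A^{m_1}$, and likewise between $(G_2,\bar\Phi)$ and a dynamical system on $A^{m_2}$. Since $m_1,m_2<m$, the inductive hypothesis of Theorem~\ref{thm:endomorphisms simple power} applies to both pieces, and Lemma~\ref{claim:red 3 semiabelian} then assembles them along the short exact sequence $0\to G_1\to G\to G_2\to 0$ into the desired conclusion for $(G,\Phi)$. The only point requiring any care is the propagation of dominance through the restriction and the quotient, which is controlled by the condition $t\nmid f$ — a property inherited automatically by every factor $f_i$.
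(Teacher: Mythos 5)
Your proof is correct and takes essentially the same route as the paper: factor the minimal polynomial $f$, use one factor to cut out a proper nonzero $\Phi$-invariant connected subgroup isogenous to a lower power of $A$, and then feed the short exact sequence into Lemma~\ref{claim:red 3 semiabelian} together with Lemma~\ref{claim:red 1 abelian} and the inductive hypothesis. The only cosmetic difference is that you work directly with $G_1=\ker(f_1(\Phi))^0$ and the quotient $G/G_1$, which avoids the paper's passage to an iterate $\Phi^\ell$ to restrict to the identity component of $\ker(g(\Phi))$.
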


\begin{proof}[Proof of Claim~\ref{claim:irreducible}.]
Assume $f(t)=g(t)\cdot h(t)$ for some non-constant polynomials $g,h\in\Z[t]$. We let $A_1:=\ker(g(\Phi))$ and also, let $A_2:=g(\Phi)(A^m)$. Then $A_2$ is an abelian variety (isogenous to $A^r$ for some integer $r<m$ since $A$ is a simple abelian variety), while $A_1$ is an algebraic subgroup of $A^m$ (not necessarily connected); however, the connected component $A_1^0$ of $A_1$ is isogenous to $A^{m-r}$. Then we have a short exact sequence of algebraic groups:
$$1\lra A_1\lra A^m\lra A_2\lra 1;$$
furthermore, $\Phi$ restricts to a group endomorphism of both $A_1$ and of $A_2$. Moreover, a suitable iterate $\Phi^\ell$ (for some $\ell\in\N$) induces an endomorphism of $A_1^0$ as well. Then, using Lemma~\ref{lem:iterate}, and also applying the inductive hypothesis to each dynamical system $(A_1^0,\Phi^\ell)$ and $(A_2,\Phi)$, coupled with Lemma~\ref{claim:red 3 semiabelian} yields the desired conclusion for the dynamical system $(A^m,\Phi)$, as claimed.   
\end{proof}

From now on, we may assume that the polynomial $f(t)$ is irreducible; in particular, this means that all its roots are distinct (and nonzero, since $\Phi$ is a dominant group endomorphism). Let $L$ be the splitting field for the polynomial $f(t)$; so, $L$ is a finite Galois extension of $\Q$ containing all the roots of $f$. Then there exists an infinite set $S$ of primes $p$, which split completely in $L/\Q$. Furthermore, at the expense of excluding finitely many primes $p$ from the infinite set $S$, we may even assume that:
\begin{enumerate}
\item[(A)] the prime $p$ and the endomorphism $\Phi$ of $A^m$ satisfy the conclusion of Lemma~\ref{lem:endomorphism}; and
\item[(B)] for each $i=1,\dots, k$, there exists an endomorphism  $[\alpha_{i,j(i)}]$ (for some $j(i)\in\{1,\dots, m\}$) which induces a bijection on $A[p]$.
\end{enumerate}
Note that Lemma~\ref{lem:endomorphism} yields that only finitely many primes $p$ do not satisfy condition~(A) above. Similarly, since for each $i=1,\dots, k$, we know that there is some $j(i)\in\{1,\dots, m\}$ such that $\alpha_{i,j(i)}\ne 0$, then Lemma~\ref{lem:endomorphism} yields that for all but finitely many primes $p$, the endomorphism $[\alpha_{i,j(i)}]$ of $A$ induces a bijection of $A[p]$.

Let $p\in S$. Since $f$ has distinct, nonzero roots and $p$ splits completely in $L/\Q$, we get that the reduction of $f(t)$ modulo $p$ divides the polynomial $t^{p-1}-1$ in $\F_p[t]$; thus there exists some polynomials $g_p,h_p\in\Z[t]$ such that
\begin{equation}
\label{eq:the endomorphism equation}
t^{p-1}-1=f(t)\cdot g_p(t) + p\cdot h_p(t).
\end{equation}
Hence $\Phi^{p-1}(x)=x+ph_p(\Phi(x))=x+h_p(\Phi(px))$ for each $x\in G=A^m$. Equation \eqref{eq:the endomorphism equation} yields that for each $x\in G[p]$, we have
\begin{equation}
\label{eq:p-1}
\Phi^{p-1}(x)=x.
\end{equation}

Let $x\in G[p]$. By our assumption, there exists some positive integer $n(x)$ such that $\Phi^{n(x)}(x)\in Y$; using \eqref{eq:p-1}, we see that we may assume $n(x)\in\{1,\dots, p-1\}$. We let $x=:(x_1,\dots, x_m)\in A[p]^m$; so, there exists some $i(x)\in\{1,\dots, k\}$ such that 
\begin{equation}
\label{eq:vanishing}
\sum_{j=1}^m \left[\overrightarrow{\alpha_{i(x)}}\right]\cdot \Phi^{n(x)}(x)=0,
\end{equation}
where for any $\delta_1,\dots, \delta_m\in D$ we define the morphism $[\delta_1, \dots, \delta_m]:A^m\lra A$ as follows: 
$$[\delta_1,\dots,\delta_m]\cdot x:=[\delta_1](x_1)+\cdots +[\delta_m](x_m).$$

\begin{lemma}
\label{lem:counting i}
Let $p\in S$, let $i\in\{1,\dots, k\}$ and let $n\in\N$. Then there are $p^{2g(m-1)}$ points $x\in A[p]^m$ such that $\left[\overrightarrow{\alpha_i}\right]\cdot \Phi^n(x)=0$.
\end{lemma}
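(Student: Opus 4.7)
The plan is to reduce this counting problem to Lemma~\ref{lem:sum coprime} by combining it with the bijectivity of $\Phi^n$ on the $p$-torsion subgroup guaranteed by the conditions defining the set $S$. The key observation is that the function being counted factors as a composition of two maps, each of which has a transparent behaviour on $A[p]^m$.

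First, I would note that since $p\in S$, condition~(A) in the definition of $S$ (together with Lemma~\ref{lem:endomorphism}) ensures that $\Phi^n$ induces a bijection on $A[p]^m$ for every $n\in\N$. Thus the substitution $y:=\Phi^n(x)$ is a bijection of the finite set $A[p]^m$ onto itself, so counting elements $x\in A[p]^m$ with $[\overrightarrow{\alpha_i}]\cdot\Phi^n(x)=0$ is equivalent to counting elements $y\in A[p]^m$ satisfying the translated equation
\[
[\alpha_{i,1}](y_1)+\cdots+[\alpha_{i,m}](y_m)=0.
\]

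Next, I would invoke condition~(B), which provides some index $j(i)\in\{1,\dots,m\}$ such that $[\alpha_{i,j(i)}]$ induces a bijection on $A[p]$. Setting $\Psi_j:=[\alpha_{i,j}]$ for $j=1,\dots,m$ and $\ell:=m$, we land exactly in the setting of Lemma~\ref{lem:sum coprime}, which yields precisely $p^{2g(m-1)}$ solutions $y\in A[p]^m$ to the above equation.

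Transporting this count back through the bijection $\Phi^n$ gives the claimed $p^{2g(m-1)}$ solutions for the original equation, completing the argument. There is no real obstacle here; the lemma is an assembly of the preparatory Lemmas~\ref{lem:endomorphism} and \ref{lem:sum coprime}, and conditions~(A) and~(B) entering the definition of $S$ were tailored precisely so that each lemma could be applied in turn.
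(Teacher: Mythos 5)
Your proof is correct and follows the paper's argument exactly: use condition~(A) to substitute $y=\Phi^n(x)$ via the bijection on $A[p]^m$, then apply condition~(B) together with Lemma~\ref{lem:sum coprime} to count the solutions $y$.
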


\begin{proof}[Proof of Lemma~\ref{lem:counting i}.]
By our choice of the prime $p$ satisfying condition~(A) above, we get that $\Phi^n$ induces a bijection of $A[p]^m$. Then condition~(B), along with Lemma~\ref{lem:sum coprime} finishes the proof of Lemma~\ref{lem:counting i}.
\end{proof}

Lemma~\ref{lem:counting i} yields that there are at most $k(p-1)p^{2g(m-1)}<kp^{2gm-1}$ tuples $x:=(x_1,\dots, x_m)\in A[p]^m$, which satisfy the equation $[\overrightarrow{\alpha_i}]\cdot \Phi^n(x)=0$ for some $i=1,\dots, k$ and for some $n=1,\dots, p-1$. On the other hand, our hypothesis on the points of $Y$ yields that for each point $x\in A[p]^m$ there exists some $i(x)\in\{1,\dots, k\}$ and some $n(x)\in\{1,\dots, p-1\}$ satisfying equation \eqref{eq:vanishing}. This yields an inequality $p^{2gm}<kp^{2gm-1}$, which is false when $p>k$. This contradiction concludes our proof of Theorem~\ref{thm:endomorphisms simple power} when $G$ is a power of an abelian variety $A$. 
\end{proof}

Using the proof of Theorem~\ref{thm:endomorphisms} in the case $G$ is a power of a simple abelian variety (along with Lemmas~\ref{claim:red 1 abelian} and \ref{claim:red 3 semiabelian}), we can derive the same conclusion when $G$ is any abelian variety.

\begin{proof}[Proof of Theorem~\ref{thm:endomorphisms} when $G$ is an abelian variety.]
By Poincar\'e's theorem (see, for example, \cite[Fact~3.2]{GS-abelian}), $G$ is isogenous with a direct product of simple abelian varieties; more precisely, $G$ is isogenous to a product of the form $\prod_{i=1}^\ell A_i^{m_i}$, where each $A_i$ is a simple abelian variety.  Combining Lemma~\ref{claim:red 1 abelian} and Corollary~\ref{claim:red 2 abelian}, along with the proof of Theorem~\ref{thm:endomorphisms} for powers of simple abelian varieties yields the conclusion in Theorem~\ref{thm:endomorphisms} for any abelian variety. 
\end{proof}

Combining the fact that we have proven the conclusion of Theorem~\ref{thm:endomorphisms} for abelian varieties and also for tori, we can complete the proof of the general case in Theorem~\ref{thm:endomorphisms}.

\begin{proof}[Proof of Theorem~\ref{thm:endomorphisms}.]
First we note that we proved that when $G$ is either an abelian variety or an algebraic torus, then there is no proper subvariety intersecting the orbit of each torsion point of $G$.  For the general case of a semiabelian variety $G$, we know there exists a short exact sequence of algebraic groups:
\begin{equation}
\label{eq:short exact sequence}
1\lra \bG_m^N\lra G\lra A\lra 1,
\end{equation}
where $A$ is an abelian variety and $\bG_m^N$ is the toric part of $G$. Furthermore, $\Phi$ induces a group endomorphism $\bar{\Phi}$ of $A$ and also, we have that $\Phi|_{\bG_m^N}$ is a group endomorphism of $\bG_m^N$ (since there exist no non-trivial morphisms between an abelian variety and an algebraic torus; see also \cite[Fact~2.3]{GS-semiabelian}). Then Lemma~\ref{claim:red 3 semiabelian} yields the desired conclusion.
\end{proof}

Theorem~\ref{thm:endomorphisms} is an important ingredient in proving Theorem~\ref{thm:translations}.

\begin{proof}[Proof of Theorem~\ref{thm:translations}.]
We first note that each regular self-map on a semiabelian variety is a composition of a translation with an algebraic group endomorphism (see \cite[Fact~2.1]{GS-semiabelian}).

We start by proving some useful reductions, similar to the ones established in the beginning of our proof of Theorem~\ref{thm:endomorphisms}.

\begin{claim}
\label{claim:red 1 semiabelian}
It suffices to prove Theorem~\ref{thm:translations} for an isogenous dynamical system, i.e., if there exists an isogeny $\tau:G\lra G'$ of semiabelian varieties endowed with dominant, regular self-maps $\Phi:G\lra G$ and $\Phi ':G'\lra G'$ such that $\Phi'\circ \tau = \tau\circ \Phi$, then the conclusion of Theorem~\ref{thm:translations} for $(G,\Phi)$ yields the conclusion of Theorem~\ref{thm:translations} for $(G',\Phi')$.
\end{claim}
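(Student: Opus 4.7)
The plan is to transfer each of the two alternatives \textup{(A)} and \textup{(B)} in Theorem~\ref{thm:translations} through the relation $\Phi'\circ \tau = \tau\circ \Phi$: the witness subvariety for \textup{(A)} is pushed forward along $\tau$, while a hypothetical witness for the negation of \textup{(B)} is pulled back along $\tau$.

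The preparatory step is a dimension-preserving dictionary for periodic subvarieties under the finite isogeny $\tau$; here I use that $\Phi$, being a composition of a translation with a dominant group endomorphism by \cite[Fact~2.1]{GS-semiabelian}, is finite \'etale, so every iterate $\Phi^N$ preserves the dimensions of irreducible subvarieties. On one side, if $V\subsetneq G$ is irreducible with $\Phi^n(V)=V$, then $\tau(V)\subsetneq G'$ is irreducible of the same dimension, and $(\Phi')^n(\tau(V))=\tau(\Phi^n(V))=\tau(V)$. On the other side, if $V'\subsetneq G'$ is irreducible with $(\Phi')^N(V')=V'$, the irreducible components $V_1,\dots,V_s$ of $\tau^{-1}(V')\subsetneq G$ all have dimension $\dim V'$ and each surjects onto $V'$ under $\tau$; since $\Phi^N$ is finite, $\Phi^N(V_i)$ is an irreducible closed subvariety of $\tau^{-1}(V')$ of full dimension $\dim V'$, hence equals some $V_j$, defining a self-map $\sigma$ on the finite set $\{V_1,\dots,V_s\}$. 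Every self-map of a finite set has a cycle, so at least one component $V_{i_0}$ is $\Phi$-periodic.

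With this dictionary, both implications are short. For \textup{(A)}, if $X\subsetneq G$ contains every proper irreducible $\Phi$-periodic subvariety of $G$, set $X':=\tau(X)$; since $\tau$ is finite and $\dim X<\dim G=\dim G'$, $X'$ is a proper closed subvariety of $G'$. For any proper irreducible $\Phi'$-periodic $V'\subset G'$, at least one component $V_{i_0}$ of $\tau^{-1}(V')$ is proper, irreducible and $\Phi$-periodic, so $V_{i_0}\subset X$ and $V'=\tau(V_{i_0})\subset X'$. For \textup{(B)}, suppose \textup{(B)} holds for $(G,\Phi)$ but fails for $(G',\Phi')$ via a proper $Y'\subsetneq G'$; set $Y:=\tau^{-1}(Y')\subsetneq G$. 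For each proper irreducible $\Phi$-periodic $V\subset G$, $\tau(V)$ is proper irreducible $\Phi'$-periodic, so $(\Phi')^n(\tau(V))=\tau(\Phi^n(V))\subset Y'$ for some $n$, which gives $\Phi^n(V)\subset\tau^{-1}(Y')=Y$, contradicting \textup{(B)} for $(G,\Phi)$.

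The one delicate point is extracting the periodic component $V_{i_0}$: the map $\sigma$ need not be bijective (already with $G=G'=\bG_m$ and $\tau=\Phi=x\mapsto x^2$, the component $\{-1\}$ of $\tau^{-1}(\{1\})$ is not $\Phi$-periodic), so not every component works, but the finite-set-cycle argument always produces at least one periodic component, which suffices since every component of $\tau^{-1}(V')$ surjects onto $V'$. Beyond this, the argument amounts to standard dimension-counting for finite isogenies.
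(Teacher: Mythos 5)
Your argument is correct and follows the same two-step transfer as the paper: push a witness for (A) forward along $\tau$, pull a hypothetical witness for the failure of (B) back along $\tau$. The one place you go beyond the paper is the preparatory ``dictionary'' step: the paper's proof simply asserts that, given a proper irreducible $\Phi'$-periodic $Z'\subset G'$, one may choose an irreducible component of $\tau^{-1}(Z')$ that is $\Phi$-periodic, without justification. You correctly observe (with the $x\mapsto x^2$ example on $\bG_m$) that not every component is periodic and supply the missing argument: since $\Phi^N$ is finite (hence dimension-preserving) and $\tau$ is finite \'etale (so every component of $\tau^{-1}(Z')$ dominates $Z'$ and has dimension $\dim Z'$), $\Phi^N$ induces a self-map of the finite set of components, which must have a cycle. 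This is a genuine gap in the paper's exposition that your proof fills cleanly; otherwise the two proofs are essentially identical.
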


\begin{proof}[Proof of Claim~\ref{claim:red 1 semiabelian}.]
First we show that knowing conclusion~(A) in Theorem~\ref{thm:translations} holds for the proper subvariety $X\subset G$ (under the action of $\Phi$) yields that also conclusion~(A) holds for the proper subvariety $X':=\tau(X)\subset G'$ (under the action of $\Phi'$). Indeed, given any proper, irreducible, periodic subvariety $Z'\subset G'$ (under the action of $\Phi'$), we let $Z\subset G$ be an irreducible component of $\tau^{-1}(Z')$, which is periodic under the action of $\Phi$. The fact that conclusion~(A) in Theorem~\ref{thm:translations} holds for the dynamical system $(G,\Phi)$ yields that $Z\subseteq X$; so, 
$Z'=\tau(Z)\subseteq \tau(X)=X'$, 
as claimed. 

Next we show that if conclusion~(B) holds for $(G,\Phi)$, then it must also hold for $(G',\Phi')$. Indeed, assume on the contrary that there exists some proper subvariety $Y'\subset G'$ which contains an iterate of each periodic, irreducible, proper subvariety of $G'$ (under the action of $\Phi'$). We claim that $Y:=\tau^{-1}(Y')$, which is a proper subvariety of $G$, must contain an iterate of each irreducible, proper, periodic subvariety of $G$ (under the action of $\Phi$), therefore contradicting conclusion~(B) for the dynamical system $(G,\Phi)$. Let $Z\subset G$ be a proper, irreducible, periodic subvariety of $G$ and let $Z':=\tau(Z)\subset G'$; clearly, $Z'$ is a proper, irreducible, periodic subvariety of $G'$. Then there exists some $m\in\N$ such that $(\Phi')^m(Z')\subseteq Y'$; so,
$$(\Phi')^m(\tau(Z))=\tau(\Phi^m(Z))\subseteq Y',$$
which yields that $\Phi^m(Z)\subseteq \tau^{-1}(Y')=Y$, as claimed.

This finishes our proof of Claim~\ref{claim:red 1 semiabelian}.
\end{proof}

\begin{claim}
\label{claim:red 2 semiabelian}
With the notation as in Theorem~\ref{thm:translations}, let $\tau:G\lra G$ be an automorphism and let $\Phi ' :=\tau\circ \Phi\circ \tau^{-1}$. Then conclusion~(A) holds for $(G,\Phi)$ if and only if it holds for $(G,\Phi')$ and also, conclusion~(B) holds for $(G,\Phi)$ if and only it holds for $(G,\Phi')$.
\end{claim}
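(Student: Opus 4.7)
The plan is to exploit the elementary fact that conjugation by the automorphism $\tau$ of $G$ transports the $\Phi$-dynamics isomorphically onto the $\Phi'$-dynamics at the level of subvarieties. By induction one has $(\Phi')^n = \tau \circ \Phi^n \circ \tau^{-1}$ for every $n \ge 0$, and therefore $(\Phi')^n(\tau(V)) = \tau(\Phi^n(V))$ for any closed subvariety $V \subseteq G$. Since $\tau$ is an automorphism of the variety $G$, the assignment $V \mapsto \tau(V)$ is a bijection on proper, irreducible, closed subvarieties, and the displayed identity shows that this bijection restricts to a bijection between the sets of proper, irreducible, periodic subvarieties for $\Phi$ and for $\Phi'$, preserving the period.

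With this bijection in hand, conclusion~(A) transports directly: if $X \subset G$ is proper and contains every proper, irreducible, $\Phi$-periodic subvariety, then $\tau(X) \subset G$ is proper and contains every proper, irreducible, $\Phi'$-periodic subvariety, because any such subvariety has the form $\tau(Z)$ for some proper, irreducible, $\Phi$-periodic $Z$, and $Z \subseteq X$ gives $\tau(Z) \subseteq \tau(X)$. For conclusion~(B), one argues analogously via the contrapositive: if a proper $Y \subset G$ contains $\Phi^{m(Z)}(Z)$ for every proper, irreducible, $\Phi$-periodic $Z$, then $\tau(Y) \subset G$ is proper, and for each proper, irreducible, $\Phi'$-periodic $Z' = \tau(Z)$ we have
$$(\Phi')^{m(Z)}(Z') \;=\; \tau\bigl(\Phi^{m(Z)}(Z)\bigr) \;\subseteq\; \tau(Y),$$
so the negation of (B) also holds for $(G, \Phi')$. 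The reverse implications in each case follow by symmetry, replacing $\tau$ by $\tau^{-1}$ and using $\Phi = \tau^{-1} \circ \Phi' \circ \tau$.

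The main obstacle here is essentially notational bookkeeping: no deep dynamical input is required, and in particular neither Theorem~\ref{thm:endomorphisms} nor any of the earlier lemmas of this section need to be invoked. The entire claim reduces to the observation that conjugation by an automorphism intertwines the two dynamical systems at the level of subvarieties and their iterates, and hence preserves any property formulated purely in terms of the combinatorics of proper, irreducible, periodic subvarieties.
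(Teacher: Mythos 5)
Your proof is correct. The paper, however, disposes of this claim in a single line by observing that an automorphism $\tau$ is in particular an isogeny, so Claim~\ref{claim:red 1 semiabelian} applies directly (with $\tau$ and $\tau^{-1}$ giving the two directions of the equivalence). Your version inlines the argument: you explicitly write out the intertwining relation $(\Phi')^n(\tau(V)) = \tau(\Phi^n(V))$, observe that $V\mapsto\tau(V)$ is a period-preserving bijection on proper irreducible periodic subvarieties, and transport (A) directly and (B) via its negation. This is essentially the same transport mechanism that underlies the proof of Claim~\ref{claim:red 1 semiabelian}, merely specialized to the automorphism case and spelled out from scratch; your presentation is more self-contained and makes the elementary nature of the reduction explicit, at the cost of redoing work the paper has already packaged into the earlier claim. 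One small clarification: for the automorphism case $\tau^{-1}(Z')$ is automatically irreducible, so the step in Claim~\ref{claim:red 1 semiabelian} of passing to an irreducible component of the preimage (needed for a general isogeny) is unnecessary here — your argument correctly exploits this simplification.
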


\begin{proof}[Proof of Claim~\ref{claim:red 2 semiabelian}.]
This is an immediate consequence of Claim~\ref{claim:red 1 semiabelian}.
\end{proof}

\begin{claim}
\label{claim:product}
Assume conclusion~(B) from Theorem~\ref{thm:translations} holds for the dynamical system $(G_1,\Phi_1)$, where $G_1$ is a semiabelian varieties endowed with a dominant, regular self-map $\Phi_1$. Then for any semiabelian variety $G_2$ endowed with a dominant, regular self-map $\Phi_2$, we have that also conclusion~(B) from Theorem~\ref{thm:translations} holds for  the semiabelian variety $G_1\times G_2$ endowed with the dominant, regular self-map $\Phi$ given by the rule $(x_1,x_2)\mapsto (\Phi_1(x_1), \Phi_2(x_2))$.
\end{claim}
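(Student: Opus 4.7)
My plan is to argue by contrapositive: assume conclusion~(B) fails for $(G:=G_1\times G_2,\Phi)$ and then extract from this failure a proper subvariety of $G_1$ witnessing the failure of conclusion~(B) for $(G_1,\Phi_1)$, contradicting the hypothesis of the claim.

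Concretely, I would suppose that there is a proper subvariety $Y\subsetneq G$ which contains some iterate of every proper, irreducible, periodic subvariety of $G$ under $\Phi$. The key geometric input is that $\Phi_2$, being a dominant regular self-map of a semiabelian variety, is surjective: it factors as a translation composed with a dominant group endomorphism (by \cite[Fact~2.1]{GS-semiabelian}, invoked at the start of the proof of Theorem~\ref{thm:translations}), and both factors are surjective. Hence $G_2$ is fixed by $\Phi_2$. Consequently, for every proper, irreducible, periodic subvariety $V_1\subsetneq G_1$ under $\Phi_1$ (say of period $m$), the product $V_1\times G_2$ is a proper, irreducible, periodic subvariety of $G$ under $\Phi$ of the same period $m$, because $\Phi^n(V_1\times G_2)=\Phi_1^n(V_1)\times\Phi_2^n(G_2)=\Phi_1^n(V_1)\times G_2$. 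By the standing assumption on $Y$, there then exists $k=k(V_1)\ge 0$ with $\Phi^k(V_1\times G_2)=\Phi_1^k(V_1)\times G_2\subseteq Y$.

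The natural candidate for the sought-after subvariety of $G_1$ is
\[
Y_1:=\{x_1\in G_1:\{x_1\}\times G_2\subseteq Y\},
\]
namely the locus of points over which the fiber of $Y$ is all of $G_2$. I would verify that $Y_1$ is Zariski closed (its complement is the image of the open set $G\setminus Y$ under the first projection $\pi:G\to G_1$, which is open since projections from products are open maps), and that $Y_1\subsetneq G_1$ (otherwise $Y=G$, contradicting properness of $Y$). The previous paragraph then gives $\Phi_1^k(V_1)\subseteq Y_1$ for every proper, irreducible, periodic $V_1\subsetneq G_1$, so $Y_1$ violates conclusion~(B) for $(G_1,\Phi_1)$, yielding the desired contradiction.

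None of the three ingredients needs substantial work: openness of the first projection from a product is standard; surjectivity of dominant regular self-maps on semiabelian varieties is immediate from the translation--endomorphism factorization; and closedness and properness of $Y_1$ are direct from the construction. The actual content of the claim is thus the simple observation that one can ``slice $Y$ over $G_2$,'' which works precisely because the surjectivity of $\Phi_2$ promotes each $V_1\times G_2$ to a periodic subvariety of $G$ whenever $V_1$ is periodic in $G_1$.
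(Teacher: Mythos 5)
Your proof is correct and follows essentially the same route as the paper's: take $V_1\times G_2$, which is periodic in $G$ because $\Phi_2$ is surjective, deduce that $Y$ contains $\Phi_1^k(V_1)\times G_2$, and conclude by applying conclusion~(B) for $(G_1,\Phi_1)$ to the closed locus $Y_1=\{x_1\in G_1:\{x_1\}\times G_2\subseteq Y\}$. The paper compresses the final step to the assertion that $Y$ must project dominantly onto $G_1$ and hence equal $G_1\times G_2$; you make the definition of $Y_1$ and the verification that it is closed and proper explicit, which is the content of that assertion.
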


\begin{proof}[Proof of Claim~\ref{claim:product}.]
Let $Y\subset G_1\times G_2$ be a subvariety containing some iterate of each proper, irreducible, periodic subvariety of $G_1\times G_2$ (under the action of $\Phi$). We know that there is no proper subvariety of $G_1$ which contains some iterate of each proper, irreducible, periodic subvariety of $G_1$, under the action of $\Phi_1$. Then for any proper, irreducible, periodic subvariety $V_1\subset G_1$ (under the action of $\Phi_1$), we have that $V_1\times G_2$ is a proper, irreducible, periodic subvariety of $G$ (under the action of $\Phi$); thus $Y$ must contain a variety of the form $\Phi_1^i(V_1)\times G_2$ for some positive integer $i$. Our hypothesis regarding the dynamical system $(G_1,\Phi_1)$ yields that $Y$ must project dominantly onto $G_1$, and moreover  $Y=G_1\times G_2$, as claimed.
\end{proof}

We know that $\Phi$ is the composition of a translation (by some point $y\in G(K)$) and a dominant group endomorphism $\Phi_0:G\lra G$. The next reduction is crucial for our proof.

\begin{claim}
\label{claim:nilpotent}
It suffices to prove the conclusion under the assumption that $\Phi_0-\id$ is a nilpotent endomorphism of $G$ (where $\id$ will always represent the identity morphism).
\end{claim}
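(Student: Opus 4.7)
The plan is to decompose $G$ according to the generalised eigenspace of $\Phi_0$ at the eigenvalue $1$, reduce the complementary piece to a pure group endomorphism via a translation conjugation, and settle that piece directly using Theorem~\ref{thm:endomorphisms}.

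Let $f(t)\in\Z[t]$ be the monic minimal polynomial annihilating $\Phi_0$ in $\End(G)$ (as in \cite[Fact~2.6]{GS-semiabelian}); factor $f(t)=(t-1)^s\,g(t)$ with $g\in\Z[t]$ monic and $g(1)\ne 0$. If $g(t)=1$ we are already in the hypothesis of the claim, so assume $\deg g\ge 1$. Set
\[
G_1:=\ker\bigl((\Phi_0-\id)^s\bigr)^0\quad\text{and}\quad G_2:=\ker\bigl(g(\Phi_0)\bigr)^0,
\]
two connected semiabelian subvarieties of $G$, each stable under $\Phi_0$. Exactly as in the proof of Claim~\ref{claim:irreducible}, Bezout in $\Q[t]$ (cleared by a positive integer) shows that the addition morphism $\psi\colon G_1\times G_2\to G$ is an isogeny. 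Writing $y=y_1+y_2$ with $y_i\in G_i$, the self-map $\Phi$ lifts through $\psi$ to
\[
\Phi'(x_1,x_2)=\bigl(\Phi_0(x_1)+y_1,\;\Phi_0(x_2)+y_2\bigr),
\]
so by Claim~\ref{claim:red 1 semiabelian} it suffices to prove Theorem~\ref{thm:translations} for $(G_1\times G_2,\Phi')$.

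Since $g(1)\ne 0$ and $g(\Phi_0|_{G_2})=0$, a further Bezout argument in $\Z[t]$ yields $M\in\N$ and $q\in\Z[t]$ with $[M]=q(\Phi_0|_{G_2})\cdot(\Phi_0|_{G_2}-\id)$ as endomorphisms of $G_2$; hence $\Phi_0|_{G_2}-\id$ is an isogeny of $G_2$, in particular surjective on $G_2(K)$, so we can pick $z_2\in G_2(K)$ with $(\Phi_0|_{G_2}-\id)(z_2)=y_2$. The map $\tau(x_1,x_2):=(x_1,x_2+z_2)$ is an automorphism of the variety $G_1\times G_2$, and a direct computation gives
\[
(\tau\circ\Phi'\circ\tau^{-1})(x_1,x_2)=\bigl(\Phi_0(x_1)+y_1,\;\Phi_0(x_2)\bigr),
\]
whose second coordinate is the pure group endomorphism $\Phi_0|_{G_2}$. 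By Claim~\ref{claim:red 2 semiabelian}, it suffices to prove Theorem~\ref{thm:translations} for this conjugated map.

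Because $\deg g\ge 1$ we have $\dim G_2>0$, and $\Phi_0|_{G_2}$ is dominant (annihilated by $g$ with $g(0)\ne 0$, hence invertible up to isogeny). Applying Theorem~\ref{thm:endomorphisms} to $(G_2,\Phi_0|_{G_2})$, no proper subvariety of $G_2$ intersects the orbit of each periodic torsion point; in particular, conclusion~(B) of Theorem~\ref{thm:translations} holds for $(G_2,\Phi_0|_{G_2})$. Claim~\ref{claim:product} (with the two factors swapped) then propagates conclusion~(B) to the full product, and reversing the two preceding reductions yields conclusion~(B) for $(G,\Phi)$. Thus whenever $\Phi_0-\id$ is \emph{not} nilpotent, the dichotomy of Theorem~\ref{thm:translations} is settled directly, and it remains to handle the case $\Phi_0-\id$ nilpotent. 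The main bookkeeping obstacle is to verify that the isogeny decomposition $G\sim G_1\times G_2$ is simultaneously compatible with $\Phi_0$ and with the translation by $y$; once this is in place, the conclusion follows from a mechanical chain of Claims~\ref{claim:red 1 semiabelian}, \ref{claim:red 2 semiabelian}, \ref{claim:product}, together with Theorem~\ref{thm:endomorphisms}.
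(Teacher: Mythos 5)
Your proof is correct and follows essentially the same strategy as the paper's: decompose $G$ up to isogeny into the generalised eigenpiece for the eigenvalue $1$ and its complement via the factorisation $f=(t-1)^s g$, remove the translation on the complementary piece by conjugating with a suitable translation, apply Theorem~\ref{thm:endomorphisms} there to obtain conclusion~(B), and propagate it back through Claims~\ref{claim:red 1 semiabelian}, \ref{claim:red 2 semiabelian}, and \ref{claim:product}. The only cosmetic differences are that you take identity components of kernels where the paper takes images of $(\Phi_0-\id)^r$ and of $g(\Phi_0)$ (isogenous constructions, with the labels $G_1$, $G_2$ swapped), and that you phrase the endpoint as an explicit dichotomy on whether $\Phi_0-\id$ is already nilpotent.
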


\begin{proof}[Proof of Claim~\ref{claim:nilpotent}.]
Let $f\in \Z[t]$ be the minimal, monic polynomial satisfied by $\Phi_0$, i.e., $f(\Phi_0)=0$. Since $\Phi$ (and therefore also $\Phi_0$) is dominant, the roots of $f(t)$ are nonzero. We let $r$ be the order of $1$ as a root of $f(t)$ and also, we let $g\in \Z[t]$ such that $f(t)=(t-1)^r\cdot g(t)$. Then we let $G_1:=(\Phi_0-\id)^r(G)$ and $G_2:=(g(\Phi_0))(G)$. 

Because the polynomials $(t-1)^r$ and $g(t)$ are coprime, coupled with the fact that the group of $K$-rational points of a semiabelian variety is divisible, we get that $G_1+G_2=G$; also, we have that $G_1\cap G_2$ is finite (these facts are proven in \cite[Lemma~6.1]{GS-abelian} and \cite[(4.0.2)]{GS-semiabelian}). Thus $G$ is isogenous with the direct product $G_1\times G_2$; more precisely, we consider the isogeny $\tau:G_1\times G_2\lra G$ given by $\tau(x,y)=x+y$. 

We let $y_1\in G_1(K)$ and $y_2\in G_2(K)$ such that $y=y_1+y_2$; then we let $\Phi_1:G_1\lra G_1$ be the composition of $(\Phi_0)|_{G_1}$ with the translation by $y_1$, while $\Phi_2:G_2\lra G_2$ is given by the composition of $(\Phi_0)|_{G_2}$ with the translation by $y_2$. We get that for each $x_1\in G_1$ and $x_2\in G_2$ we have:
$$\Phi(\tau(x_1,x_2))=\tau(\Phi_1(x_1),\Phi_2(x_2));$$
in other words, the dynamical system $(G,\Phi)$ is isogenous to the dynamical system $(G_1\times G_2,\Phi_1\times \Phi_2)$. Thus, by Claim~\ref{claim:red 1 semiabelian}, it suffices to prove the conclusion of Theorem~\ref{thm:translations} for $(G_1\times G_2,\Phi_1\times \Phi_2)$. 

We immediately observe that conclusion~(B) from Theorem~\ref{thm:translations} always holds for $(G_1,\Phi_1)$, since in this case, our problem reduces to Theorem~\ref{thm:endomorphisms}. Indeed, we observe that $\Phi_0-\id$ is dominant on $G_1$ (note that the minimal polynomial satisfied by $\Phi_0$ as an endomorphism of $G_1$ is $g(t)$, which has no root equal to $1$). Thus there exists some $y_3\in G_1(K)$ such that $\Phi_0(y_3)-y_3=-y_1$ (note that each dominant group endomorphism is actually surjective) and this yields  (after denoting by $\tau_z$ the translation-by-$z$ map on $G_1$ for any point $z\in G_1(K)$) that 
$$(\tau_{-y_3}\circ \Phi_1\circ \tau_{y_3})(x)=\Phi_0(x)\text{ for each }x\in G_1;$$ 
therefore $\tau_{-y_3}\circ\Phi_1\circ \tau_{y_3}$ is a group endomorphism of $G_1$. Using Claim~\ref{claim:red 2 semiabelian}, we see that it is no loss of generality to assume that $y_1=0$, i.e., $\Phi_1:G_1\lra G_1$ is a dominant group endomorphism, and then, as proven in Theorem~\ref{thm:endomorphisms}, we know that alternative~(B) holds in this case. 

Using Claim~\ref{claim:product} (and also Claim~\ref{claim:red 1 semiabelian}), we are left to proving Theorem~\ref{thm:translations} for the dynamical system $(G_2,\Phi_2)$, in which case we do know (as claimed) that $\Phi_2-\id$ is a nilpotent endomorphism of $G_2$. This finishes our proof of Claim~\ref{claim:nilpotent}.
\end{proof}

From now on, we work with the extra hypothesis on the dynamical system $(G,\Phi)$ established by Claim~\ref{claim:nilpotent} and thus assume that $\Phi=\tau_y\circ\Phi_0$ (where $\tau_y$ is the translation-by-$y$ map on $G$)  and furthermore $\Psi:=\Phi_0-\id$ is a nilpotent endomorphism of $G$. We prove that  Theorem~\ref{thm:translations} must hold under these assumptions.

Let $W$ be an irreducible, proper subvariety of $G$ that is of maximal dimension among all irreducible, periodic subvarieties under the action of $\Phi$; clearly, if there is no such $W$, i.e., there is no proper subvariety of $G$, periodic under the action of $\Phi$, then alternative~(A) holds trivially.

We note that the conclusion in Theorem~\ref{thm:translations} is preserved if we replace $\Phi$ by an iterate $\Phi^N$; the argument is identical with the proof of Lemma~\ref{lem:iterate}.

With the above observation, we may assume that $W$ is fixed by $\Phi$.
\begin{claim}
\label{claim:1-translate}
For each $x\in \ker(\Psi)$, we have that $x+W$ is also fixed by $\Phi$.
\end{claim}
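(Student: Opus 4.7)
The plan is to carry out a direct computation using the group structure, since the claim is essentially a formal consequence of how $\Phi$ decomposes as a translation composed with a group endomorphism. The key point is to unwind what the hypothesis $x \in \ker(\Psi)$ means: since $\Psi = \Phi_0 - \id$, it simply says $\Phi_0(x) = x$. Combined with $\Phi(W) = W$ (as $W$ is fixed by $\Phi$), this should give $\Phi(x+W) = x+W$ with no extra work.

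More concretely, I would write $\Phi = \tau_y \circ \Phi_0$ so that $\Phi(g) = \Phi_0(g) + y$ for every $g \in G$. Since $\Phi_0$ is a group endomorphism, it respects translation, so
\[
\Phi(x+W) \;=\; \Phi_0(x+W) + y \;=\; \Phi_0(x) + \Phi_0(W) + y \;=\; x + \bigl(\Phi_0(W) + y\bigr) \;=\; x + \Phi(W) \;=\; x + W,
\]
where the third equality uses $\Phi_0(x) = x$ and the last uses that $W$ is $\Phi$-fixed. This proves that $x+W$ is fixed by $\Phi$.

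There isn't really a main obstacle here; the claim is essentially a formal observation recording that $\ker(\Psi)$ acts by translation on the set of $\Phi$-fixed subvarieties. The content of the argument in the larger proof of Theorem~\ref{thm:translations} will come in subsequent claims, where one presumably uses this translation-invariance together with the maximality of $\dim W$ to conclude strong structural consequences (for example, that the union of translates $\bigcup_{x \in \ker(\Psi)}(x+W)$ is constrained, or that $W$ contains a coset of a large subgroup). I would present the proof as a one-line computation so as not to obscure where the real work lies later.
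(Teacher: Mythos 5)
Your proof is correct and is essentially the same computation as the paper's: the paper fixes an arbitrary $w \in W$ and shows $\Phi(x+w) = \Phi_0(x) + \Phi(w) = x + \Phi(w) \in x+W$, which is your set-level identity written pointwise. The one-line computation is indeed the right level of detail here.
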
  

\begin{proof}[Proof of Claim~\ref{claim:1-translate}.]
Let $w\in W$; we know that $\Phi(w)\in W$ and furthermore, since $x\in \ker(\Phi_0-\id)$, we have that $\Phi_0(x)+\Phi(w)\in x+W$. But  $\Phi_0(x)+\Phi(w)=\Phi(x+w)$ and therefore, $\Phi(x+w)\in x+W$ for each $w\in W$, thus proving that $x+W$ is indeed fixed by $\Phi$.
\end{proof}

Claim~\ref{claim:1-translate} yields that the Zariski closure $Z$ of $\ker(\Psi)+W$ is fixed by $\Phi$. Since $W$ has maximal dimension among the periodic, proper, subvarieties of $G$, we must have that either $Z=G$, or that $W$ is an irreducible component of $Z$ (and that they both have the same dimension). We split our analysis into these two cases.

{\bf Case 1.} Assume $Z=G$.

In this case, we get that there is no proper subvariety $Y\subset G$ which contains an iterate of each periodic, proper, irreducible subvariety of $G$, because each $x+W$ (for $x\in \ker(\Psi)$) is fixed by $\Phi$ and their union is Zariski dense in $G$, according to our assumption.

{\bf Case 2.} $W$ is an irreducible component of $Z$ (of the same dimension).

In this case, we let $W_1:=Z$, which is a proper subvariety of $G$, fixed under the action of $\Phi$; as observed, $W_1$ may no longer be irreducible.
\begin{claim}
\label{claim:2-translation}
For each $x\in \ker(\Psi^2)$, we have that $x+W_1$ is also fixed by $\Phi$.
\end{claim}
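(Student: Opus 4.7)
The plan is to mimic the argument of Claim~\ref{claim:1-translate}, with the extra observation that $W_1$ is stable under translation by elements of $\ker(\Psi)$; this stability will absorb the residual term that arises from $x$ lying in $\ker(\Psi^2)$ rather than in $\ker(\Psi)$.

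First I would fix $x\in \ker(\Psi^2)$ and a point $w_1\in W_1$, and compute $\Phi(x+w_1)$. Using $\Phi_0=\id+\Psi$ and the fact that $\Phi_0$ is a group homomorphism, we get
\[
\Phi(x+w_1)=\Phi_0(x+w_1)+y = \Phi_0(x)+\Phi_0(w_1)+y = x+\Psi(x)+\Phi(w_1).
\]
Since $W_1=Z$ is fixed by $\Phi$ (as established in the paragraph preceding the claim) we have $\Phi(w_1)\in W_1$, and since $x\in\ker(\Psi^2)$ we have $\Psi(x)\in\ker(\Psi)$. Thus the conclusion $\Phi(x+w_1)\in x+W_1$ reduces to the set-theoretic inclusion $\ker(\Psi)+W_1\subseteq W_1$.

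For this final inclusion, I would exploit the very definition $W_1=\overline{\ker(\Psi)+W}$. Because $\ker(\Psi)$ is an algebraic subgroup of $G$, for every $k\in \ker(\Psi)$ we have $k+\ker(\Psi)=\ker(\Psi)$, and therefore $k+(\ker(\Psi)+W)=\ker(\Psi)+W$. Since translation by $k$ is a Zariski homeomorphism of $G$, it commutes with taking Zariski closure, so $k+W_1=\overline{k+\ker(\Psi)+W}=\overline{\ker(\Psi)+W}=W_1$. Applying this with $k=\Psi(x)$ yields $\Psi(x)+\Phi(w_1)\in W_1$, and hence $\Phi(x+w_1)\in x+W_1$, as required.

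I do not foresee a substantial obstacle for this claim in isolation: the argument is a direct formal consequence of the decomposition $\Phi_0=\id+\Psi$ and the $\ker(\Psi)$-invariance of $W_1$ built into its construction. The genuine difficulty is likely to appear in the iteration that will follow, where one presumably inducts up the filtration $\ker(\Psi)\subset\ker(\Psi^2)\subset\cdots$ to eventually contradict the maximality of $\dim W$ (using nilpotence of $\Psi$) and thereby conclude Case~2; however, that lies beyond the scope of the present claim.
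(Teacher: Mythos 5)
Your proposal is correct and follows essentially the same route as the paper: expand $\Phi(x+w)=x+\Psi(x)+\Phi(w)$, use that $W_1$ is $\Phi$-fixed and $\Psi(x)\in\ker(\Psi)$, and invoke $\ker(\Psi)+W_1=W_1$. The only difference is that you spell out in more detail why $\ker(\Psi)+W_1\subseteq W_1$ follows from $W_1=\overline{\ker(\Psi)+W}$ (translation by elements of the subgroup $\ker(\Psi)$ is a homeomorphism preserving the set and hence its closure), which the paper asserts more tersely.
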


\begin{proof}[Proof of Claim~\ref{claim:2-translation}.]
Indeed, for each $w\in W_1$ and each $x\in\ker(\Psi^2)$, we have that 
$$\Phi(x+w)=\Phi(w)+\Phi_0(x)=x+\Psi(x)+\Phi(w)\in x+W_1,$$
because $\Phi(w)\in W_1$ (since $W$ and therefore, also $W_1$ is fixed by $\Phi$), while $\Psi(x)\in\ker(\Psi)$ and $W_1=\ker(\Psi)+W_1$ since $W_1=Z$ is the Zariski closure of $W+\ker(\Psi)$. Hence $x+W_1$ is fixed by $\Phi$, as claimed.   
\end{proof}

Claim~\ref{claim:2-translation} yields that for each $x\in \ker(\Psi^2)$, the subvariety $x+W$ is fixed by $\Phi^{N_1}$, where $N_1$ is a positive integer depending solely on the number of irreducible components of $W_1=Z$. Without loss of generality, we replace again $\Phi$ by its iterate $\Phi^{N_1}$ such that each $x+W$ is now fixed by $\Phi$ (for each $x\in\ker(\Psi^2)$).

We argue as before and if $\ker(\Psi^2)+W_1=G$, we have that conclusion~(B) holds in Theorem~\ref{thm:translations} because the union of all irreducible,  proper subvarieties  of $G$, which are fixed by $\Phi$, is actually Zariski dense in $G$. Hence it remains to analyze the case $W_2:=\ker(\Psi^2)+W_1=\ker(\Psi^2)+W$ is a subvariety of $G$ of the same dimension as $W$. Then we argue as in the proofs of Claims~\ref{claim:1-translate}~and~\ref{claim:2-translation} and inductively derive that unless conclusion~(B) holds in Theorem~\ref{thm:translations}, then we must have that $\ker(\Psi^j)+W$ is a proper subvariety of $G$ for any $j\in\N$. However, $\Psi$ is nilpotent and so we conclude (in finitely many steps) that conclusion~(B) must hold because $\ker(\Psi^r)=G$ for sufficiently large integers $r$.

This finishes our proof of Theorem~\ref{thm:translations}.
\end{proof}


\section{Proof of Theorem~\ref{thm:primitive}}
\label{sec:proofs 2}

We first recall some basic facts about skew polynomial rings.  Given a ring $R$ with an automorphism $\sigma$, an ideal $I$ is \emph{$\sigma$-invariant} if $\sigma(I)\subseteq I$; a $\sigma$-invariant ideal $I$ is \emph{$\sigma$-prime}  if whenever $JL\subseteq I$ with $J$, $L$ $\sigma$-invariant ideals, then we must  have $J\subseteq I$ or $L\subseteq I$.  Then given a prime ideal $P$ of either $R[t;\sigma]$ or $R[t^{\pm 1};\sigma]$, where in the former case we assume $t\not\in P$, we obtain a $\sigma$-prime ideal $P\cap R$ of $R$ and conversely if $I$ is a $\sigma$-prime ideal of $R$ then $IR[t;\sigma]$ is a prime ideal of $R[t;\sigma]$ that does not contain $t$ and similarly $IR[t^{\pm 1};\sigma]$ is a prime ideal of $R[t^{\pm 1};\sigma]$. In general this map from $\sigma$-prime ideals of $R$ to prime ideals of $R[t;\sigma]$ is injective and not surjective.  We note that a $\sigma$-prime ideal is a \emph{semiprime} ideal of $R$ and not necessarily prime; that is, it is an intersection of prime ideals of $R$. 

For Theorem \ref{thm:primitive} we are interested in three different properties of prime ideals: being primitive, being rational, and being locally closed in the prime spectrum.  We recall that given a field $k$, a $k$-algebra $R$ satisfies the strong Nullstellensatz if every prime ideal is an intersection of primitive ideals and if whenever $M$ is a simple $R$-module, the endomorphism ring ${\rm End}_R(M)$ is a finite-dimensional $k$-algebra.  If $R$ is a finitely generated $k$-algebra then $R[t;\sigma]$ and $R[t^{\pm 1};\sigma]$ satisfy the strong Nullstellensatz \cite[Theorem 4.6]{McC82}.  For an algebra $S$ satisfying the strong Nullstellensatz we always have the following implications for $P\in {\rm Spec}(S)$:
$$P~{\rm locally ~closed}\implies P~{\rm primitive}\implies P~{\rm rational}.$$

In order to prove Theorem~\ref{thm:primitive}, we need a result about invariant subvarieties of $\mathbb{G}_m^d$; it would be interesting (by itself) to know whether an analogue of this result holds in a more general setting. Before stating our lemma, we recall that an endomorphism $\Phi$ of some  quasiprojective variety $X$, which is not necessarily irreducible, preserves a non-constant fibration if there exists a rational function $f:X\dashrightarrow \bP^1$ such that $f\circ \Phi=f$ and such that $f$ is defined and non-constant on a dense open subset of some irreducible component of $X$; this notion plays a crucial role in the Medvedev-Scanlon conjecture from \cite{MS-Annals} regarding Zariski dense orbits.

\begin{lemma} Let $n\in\mathbb{N}$ and let $\phi: \mathbb{G}_m^n\to \mathbb{G}_m^n$ be the composition of a translation with an algebraic group  automorphism.  If $X\subseteq \mathbb{G}_m^n$ is an  irreducible subvariety defined over an algebraically closed field $k$ of characteristic $0$ that is invariant under $\phi$ and moreover, if $\phi|_X$ does not preserve a non-constant fibration, then $X$ is a translate of some subtorus of $\bG_m^n$.
\label{lem:red}
\end{lemma}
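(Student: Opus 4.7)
My plan is to combine the Medvedev--Scanlon conjecture for regular self-maps of $\mathbb{G}_m^n$ with Laurent's theorem on subvarieties of tori to identify $X$ as a translate of a subtorus. Since $\phi|_X$ is a dominant regular self-map preserving no non-constant fibration, I would invoke the Medvedev--Scanlon conjecture for such dynamical systems on invariant subvarieties of semiabelian varieties in characteristic zero, established in \cite{GS-semiabelian} (see also \cite{dynamical Rosenlicht} for the case of uncountable $k$), to produce a point $x_0 \in X(k)$ whose forward $\phi$-orbit $O := \{\phi^m(x_0) : m \geq 0\}$ is Zariski dense in $X$.

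Next, I would show that $O$ lies inside a finitely generated subgroup of $\mathbb{G}_m^n(k)$. Write $\phi = \tau_c \circ \sigma$ with $c \in \mathbb{G}_m^n(k)$ and $\sigma$ an algebraic group automorphism of $\mathbb{G}_m^n$. The characteristic polynomial $f(t) = t^n + a_{n-1} t^{n-1} + \cdots + a_0 \in \mathbb{Z}[t]$ of $\sigma$ is monic with constant term $a_0 = \pm 1$; setting $a_n := 1$, Cayley--Hamilton yields the multiplicative relation $\prod_{i=0}^n \sigma^i(y)^{a_i} = 1$ for every $y \in \mathbb{G}_m^n$. Hence the subgroup $\Gamma := \langle \sigma^i(x_0), \sigma^i(c) : 0 \leq i \leq n-1 \rangle \subseteq \mathbb{G}_m^n(k)$ is finitely generated, stable under both $\sigma$ and $\sigma^{-1}$, and contains $c$, so $\phi(\Gamma) \subseteq \Gamma$ and in particular $O \subseteq \Gamma$.

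Finally, by Laurent's theorem \cite{Laurent} (valid over any algebraically closed field of characteristic zero), $X \cap \Gamma$ is contained in a finite union $\bigcup_i \gamma_i H_i \subseteq X$ with each $H_i$ a subgroup of $\Gamma$. Taking Zariski closures and using that $O \subseteq X \cap \Gamma$ is Zariski dense in $X$, we obtain $X = \overline{O} = \bigcup_i \gamma_i \overline{H_i}$. Each $\overline{H_i}$ is an algebraic subgroup of $\mathbb{G}_m^n$, hence a finite disjoint union of cosets of its identity component, which is a subtorus; so $X$ is a finite union of cosets of subtori, and its irreducibility forces $X$ to equal a single such coset, as claimed. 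The chief obstacle is matching the Medvedev--Scanlon conjecture to precisely the setting here---a dominant regular self-map on the irreducible invariant subvariety $X \subseteq \mathbb{G}_m^n$, over an algebraically closed $k$ of characteristic zero that need not be uncountable---but this case is exactly what is handled in \cite{GS-semiabelian}, so this step should proceed cleanly.
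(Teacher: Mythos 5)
There is a genuine gap in the first step. You invoke the Medvedev--Scanlon conjecture for the dynamical system $(X, \phi|_X)$ and assert that \cite{GS-semiabelian} establishes it in this setting, but \cite{GS-semiabelian} proves the Medvedev--Scanlon statement for dominant regular self-maps of a \emph{semiabelian variety} $G$, not for an arbitrary irreducible $\phi$-invariant subvariety $X\subseteq \mathbb{G}_m^n$. The lemma we are trying to prove is precisely that $X$ \emph{is} a translate of a subtorus, so assuming the applicability of \cite{GS-semiabelian} to $(X,\phi|_X)$ is essentially circular. (Indeed, in the paper the order of operations is the opposite: Lemma~\ref{lem:red} is used to show $X\cong\mathbb{G}_m^d$ \emph{first}, and only then is \cite{GS-semiabelian} applied to $X$ to produce a dense orbit.) The result of \cite{A-C, dynamical Rosenlicht} does apply to arbitrary quasiprojective $X$, but only over uncountable $k$; the lemma is stated for all algebraically closed $k$ of characteristic $0$, and your proposal does not carry out the reduction to the uncountable case. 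Such a reduction would require justifying (i) that ``$\phi|_X$ preserves no non-constant fibration'' persists after base change to an uncountable algebraically closed extension $K\supseteq k$, and (ii) that the conclusion ``$X_K$ is a coset of a subtorus'' descends back to $k$; neither is addressed.

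The remainder of your argument (steps two and three) is sound and genuinely different from the paper's proof: the Cayley--Hamilton trick showing the orbit lies in a finitely generated, $\sigma^{\pm 1}$-stable subgroup $\Gamma$, followed by Laurent's theorem applied to $X\cap\Gamma$ and an irreducibility argument, is a clean route once a dense orbit is in hand. By contrast, the paper avoids the Medvedev--Scanlon input altogether: it takes a minimal coset of a subtorus $Y\supseteq X$ invariant under a power of $\phi$, and uses the $S$-unit equation in function fields \cite{Zannier} to show that any linear dependence among monomials on $X$ forces a $\phi$-invariant ratio $m_1/m_2$, which by the no-fibration hypothesis must be constant, cutting $Y$ down and contradicting minimality. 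That argument is self-contained in a way your first step is not; to make your approach rigorous you would need either the base-change-and-descent argument sketched above, or a direct justification of a dense orbit on $X$ over arbitrary algebraically closed $k$.
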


\begin{proof}
Let $Y\subseteq \mathbb{G}_m^n$ be an irreducible subvariety of minimal dimension with respect to having the following properties:
\begin{enumerate}
\item[(i)] $Y$ is invariant under some power of $\phi$;
\item[(ii)] $Y$ is a translate of a subtorus of $\bG_m^n$;
\item[(iii)] $Y\supseteq X$.
\end{enumerate}
Such a $Y$ exists by hypothesis.  We claim that $Y=X$.  To see this, we have
$\mathcal{O}(Y) = k[x_1^{\pm 1},\ldots ,x_d^{\pm 1}]$, where $d=\dim(Y)$.  If $\mathcal{O}(Y)\neq \mathcal{O}(X)$ then there is a non-trivial linear combination 
$$\sum_{i=1}^r c_i m_i$$ that vanishes identically on $X$, where $c_1,\ldots ,c_r\in k^*$ and $m_1,\ldots ,m_r$ are distinct monomials in $x_1^{\pm 1},\ldots ,x_d^{\pm 1}$.  We may assume that $r$ is minimal with respect to such a dependence holding on $X$ and since the $m_i$ are units in $\mathcal{O}(X)$, we see that $r\ge 2$.

Then let $H$ denote the subgroup of $\mathcal{O}(X)$ generated by the images of $x_1^{\pm 1},\ldots ,x_d^{\pm 1}$.  By replacing $\phi$ by a power, we may assume that $X$ and $Y$ are both $\phi$-invariant.
For every $t\ge 1$ we have
$$\sum_{i=1}^r c_i (\phi^*)^t(m_i)$$ vanishes on $X$, and we have $(\phi^*)^t(m_i)\in k^* H$ for every $t$.
In particular, by the theory of $S$-unit equations for function fields (see, for example, \cite{Zannier}) we have that 
there exist $s<t$ such that
$$(\phi^*)^t(m_i)=\lambda_i (\phi^*)^s(m_i)$$ on $X$ for $i=1,\ldots ,r$ for some $\lambda_1,\ldots ,\lambda_r\in k^*$.
Since $\phi$ is an automorphism, this then gives us that $(\phi^*)^{t-s}(m_i)=\lambda_i m_i$ on $X$ for $i=1,\ldots, r$.  In particular the fact that
$$\sum_{i=1}^r c_i m_i$$ vanishes identically on $X$ gives that
$$\sum_{i=1}^r c_i (\phi^*)^{t-s}(m_i)$$ vanishes identically on $X$ and so
$\sum_{i=1}^r c_i \lambda_i m_i=0$ on $X$.  In particular, we have
$$\sum_{i=2}^r c_i (\lambda_i-\lambda_1) m_i=0$$ on $X$.  By minimality of $r$ in our relation, we see that $\lambda_1=\cdots =\lambda_r$.  Then since $r\ge 2$ we have
$$(\phi^*)^{t-s}(m_1/m_2) = m_1/m_2.$$  Letting $h=m_1/m_2$ we then see that either $h$ is constant on $X$ or $\phi^{t-s}|_X$ preserves a non-constant fibration.  But the latter possibility would then give that $\phi|_X$ preserves a non-constant fibration (see \cite[Lemma~2.1]{3-folds}), which is a contradiction.  It follows that there is some $\lambda\in k^*$ such that $h=\lambda$ on $X$.  Then we can write $h=x_1^{i_1}\cdots x_d^{i_d}$ with $i_1,\ldots ,i_d\in \mathbb{Z}$, not all zero since $m_1\ne m_2$.  We may also assume that the integers $i_1,\ldots ,i_d$ are coprime since otherwise we could replace $h$ by a root of it in $H$ and it would still be identically equal to some nonzero value of $k$ on $X$ since $k$ is algebraically closed. 
Then we have a surjective map
$$\mathbb{Z}^d\to \mathbb{Z}$$ given by $(a_1,\ldots ,a_d)\mapsto \sum a_j i_j$ and since $\mathbb{Z}$ is projective as a module over itself the sequence splits and since all projectives are free, we see that
$\mathcal{O}(Y)^*/k^* \cong  \langle h\rangle \oplus \langle h_1,\ldots ,h_{d-1}\rangle,$ where $h_1,\ldots ,h_{d-1}$ are monomials in $x_1^{\pm 1},\ldots ,x_d^{\pm 1}$.  In particular,
$\mathcal{O}(Y)/(h-\lambda)\cong \mathcal{O}(\mathbb{G}_m^{d-1})$.   Let $Y'=Y\cap V(h-\lambda)$.  Then $Y'$ contains $X$, it is $\phi^{t-s}$-invariant and irreducible, and also it is a translate of an algebraic subtorus, contradicting the minimality of $Y$.  It follows that $Y=X$ and the result follows.
 \end{proof}
 We note that a subvariety not preserving a non-constant fibration is related to the property of being rational---this connection is made precise in Lemma \ref{lem:rat}.
Finally, we need a result relating being locally closed to $\sigma$-invariant ideals.
 \begin{lemma}
 Let $k$ be a field and let $R$ be a finitely generated commutative $k$-algebra that is an integral domain and let $\sigma$ be a $k$-algebra automorphism of $R$.  Then if $R$ is not a field then $(0)$ is a locally closed prime ideal of $R[x;\sigma]$ if and only if the intersection of the nonzero $\sigma$-prime ideals of $R$ is nonzero. 
 \label{lem:lc}
 \end{lemma}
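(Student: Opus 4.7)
The plan is to translate the topological condition into an algebraic one. Specifically, $(0)$ is locally closed in $\Spec(R[x;\sigma])$ precisely when $\Spec(R[x;\sigma])\setminus\{(0)\}$ is Zariski-closed, which amounts to the existence of a nonzero element $f \in R[x;\sigma]$ lying in every nonzero prime of $R[x;\sigma]$. So the lemma reduces to showing that $R[x;\sigma]$ admits such a witness $f$ if and only if $R$ admits a nonzero $a$ lying in every nonzero $\sigma$-prime.

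For the easy direction, I would assume a witness $f$ exists. Using the fact recalled at the beginning of Section~\ref{sec:proofs 2} that every nonzero $\sigma$-prime $\mathfrak{p}$ of $R$ produces a nonzero prime $\mathfrak{p} R[x;\sigma]$, the witness must sit inside $\mathfrak{p} R[x;\sigma]$ for every such $\mathfrak{p}$, which forces each coefficient of $f$ to lie in $\mathfrak{p}$. Since $f \neq 0$, some nonzero coefficient of $f$ then lies simultaneously in every nonzero $\sigma$-prime, furnishing the desired $a$.

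For the converse, I would fix $0 \neq a \in R$ lying in every nonzero $\sigma$-prime and test $f := ax$. Let $P$ be any nonzero prime of $R[x;\sigma]$. If $x \in P$ then trivially $f \in P$. Otherwise $P \cap R$ is $\sigma$-invariant (because for $r \in P \cap R$ one has $\sigma(r) x = xr \in P$, and primeness with $x \notin P$ gives $\sigma(r) \in P$); as noted in the excerpt, this $\sigma$-invariant contraction is in fact a $\sigma$-prime of $R$, so if it is nonzero then $a \in P \cap R \subseteq P$ and $f \in P$. The delicate case, which I plan to rule out, is that $P \neq 0$ with $x \notin P$ and $P \cap R = 0$.

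To dispatch this remaining case, I would split on the order of $\sigma$. If $\sigma$ has infinite order, I would localize at $R \setminus\{0\}$ to obtain a nonzero two-sided prime of $K[x;\sigma]$ with $K = \Frac(R)$, then carry out a minimal-degree analysis: comparing $cf$ to $fc$ for a generic scalar $c \in K$, and invoking the elementary fact that a field cannot be the union of two proper subfields (applied to fixed subfields of powers of $\sigma$), should force the nonzero two-sided ideals of $K[x;\sigma]$ to be exactly the powers $(x^n)$. Hence $x$ lies in the localized prime, so $sx \in P$ for some $s \in R\setminus\{0\}$, and primeness together with $P \cap R = 0$ forces $x \in P$, a contradiction. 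If instead $\sigma$ has finite order $m$, then $R$ is integral over the fixed ring $R^\sigma$ and, since $R$ is not a field, neither is $R^\sigma$; consequently $R^\sigma$ is a finitely generated commutative $k$-algebra domain of positive Krull dimension and its nonzero primes intersect to zero. Because nonzero $\sigma$-primes of $R$ correspond to nonzero primes of $R^\sigma$, and any nonzero $\sigma$-invariant ideal of $R$ meets $R^\sigma$ nontrivially via the orbit product $\prod_{i=0}^{m-1}\sigma^i(a) \in R^\sigma$, the intersection of nonzero $\sigma$-primes of $R$ is zero, so the hypothesis of the converse cannot hold and the implication is vacuous in this case. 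The main obstacle I anticipate is the structural analysis of two-sided ideals in $K[x;\sigma]$ in the infinite-order case, where one has to carefully exploit the interplay between the degree filtration and the twisted commutation relation.
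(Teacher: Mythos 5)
Your proposal is correct and mirrors the paper's proof: both arguments amount to classifying the nonzero primes of $R[x;\sigma]$ according to whether they contain $x$ and whether they meet $R$ nontrivially, and both eliminate the problematic case ($x\notin P$ and $P\cap R=(0)$) by passing to the fraction field $K$ of $R$ and splitting on the order of $\sigma$, showing the hypothesis is vacuous in the finite-order case. The only differences are cosmetic: you exhibit an explicit witness $ax$ and sketch the two-sided ideal structure of $K[x;\sigma]$ directly, whereas the paper argues via the intersection of each type of prime and cites the simplicity of $K[t^{\pm 1};\sigma]$; note in your sketch that the degree-lowering comparison should be $cf$ against $f\sigma^{-n}(c)$ rather than $fc$, and that a single surviving lower-degree coefficient already forces $\sigma^{j}=\mathrm{id}$ on $K$ for some $j\ge 1$, so the remark about a field not being a union of two proper subfields is superfluous.
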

 \begin{proof} If the intersection of the nonzero $\sigma$-prime ideals of $R$ is zero, then the intersection of the nonzero prime ideals of $R[x;\sigma]$ is zero and so $(0)$ is not locally closed.  It suffices to show that if the intersection of the nonzero $\sigma$-prime ideals of $R$ is nonzero then $(0)$ is locally closed.  Henceforth we assume that the intersection of the nonzero $\sigma$-prime ideals of $R$ is nonzero.  
There are three types of nonzero prime ideals in ${\rm Spec}(R[t;\sigma])$:
\begin{enumerate}
\item[(i)] primes $Q$ such that $Q\cap R\neq (0)$ and $t\not \in Q$;
\item[(ii)] primes $Q$ such that $t\in Q$;
\item[(iii)] primes $Q$ such that $t\not\in Q$ and $Q\cap R=(0)$.
\end{enumerate}
Since a finite intersection of nonzero ideals in a prime ring is nonzero, it suffices to show that the intersection of the nonzero primes of each type is nonzero.  Note that if $t\not\in Q$ then $Q\cap R$ is $\sigma$-invariant, since for $q\in Q$ we have $t q =\sigma(q) t\in Q$ and since $tS[t;\tau]=S[t;\tau]t$ we then have $$\sigma(q) S[t;\tau] t\subseteq Q,$$ and so $\sigma(q)\in Q$ since $Q$ is prime and $t\not\in Q$.  Then the intersection of prime ideals of type (i) is nonzero since $Q\cap R$ is a $\sigma$-prime ideal when $t\not\in Q$ and we are assuming that the intersection of nonzero $\sigma$-prime ideals is nonzero.  

For primes of type (ii), we have $t$ is in the intersection.  

We note that if $t\not \in Q$ then no power of $t$ is in $Q$, because $tR[t;\sigma]=R[t;\sigma]t$ and so if $t^m\in Q$ then $(t)^m\subseteq Q$ and since $Q$ is prime we then have $(t)\subseteq Q$, which we have assumed is not the case.  Thus primes of type (iii) cannot contain an element of the form $s t^m$ with $m\ge 1$ and $s\in R\setminus \{0\}$, and hence they survive in the localization 
$K[t^{\pm 1};\sigma]$, where $K$ is the field of fractions of $R$.  But this algebra is simple unless $\sigma$ has finite order \cite[\S6]{Jor93} and so we see that nonzero primes of type (iii) do not exist unless $\sigma$ has finite order.  But in this case $R$ satisfies a polynomial identity and so $(0)$ is not locally closed in $R[t;\sigma]$ and the intersection of the nonzero $\sigma$-prime ideals of $R$ is zero.  This can be seen by noting that $R$ is a finitely generated integral domain that is not a field and hence $(0)$ is an intersection of the maximal ideals and since $\sigma$ has finite order we see that the finite intersection of the orbit of each maximal ideal is a nonzero $\sigma$-prime ideal and the intersection of all such ideals is equal to zero.
The result follows.   
 \end{proof}

We are now ready to prove Theorem~\ref{thm:primitive} (a).

\begin{proof}[Proof of Theorem~\ref{thm:primitive} (a).]  
By the Irving-Small reduction techniques for relating locally closed and primitive ideals (see \cite[Lemma 8.4.28]{Row2}) we may assume that $k$ is algebraically closed.  Let $S=R[t;\sigma]$.  We divide the proof into three cases.  The first case is when $t\not\in P$ and $(P\cap R)S$ is strictly contained in $S$.  Let $I=P\cap R$ and observe that $I$ is $\sigma$-prime and that $\sigma$ induces a non-trivial automorphism of $R/I$.  Then $S/P$ is a non-trivial homomorphic image of $(R/I)[t;\sigma]$.  Let $\bar{P}$ denote the image of $P$ in $(R/I)[t;\sigma]$.  Then
$\bar{P}\cap (R/I)=(0)$ and so we see that $\bar{P}$ survives in the localization 
$Q(R/I)[t;\sigma]$ and we let $\tilde{P}$ denote the ideal generated by $\bar{P}$ in this localization.  Now $\tilde{P}$ necessarily contains a monic polynomial, since the collection of leading coefficients of elements of $\tilde{P}$ forms a nonzero $\sigma$-invariant ideal $L$ of $Q(R/I)$ and since $I$ is a $\sigma$-prime ideal of $R$, we have that $L$ is necessarily all of $Q(R/I)$.  In particular,
$Q(R/I)[t;\sigma]/\tilde{P}$ is a finite $Q(R/I)$-module and thus it satisfies a polynomial identity.  It follows that $(R/I)[t;\sigma]/\bar{P}$ satisfies a polynomial identity.  We note that $P$ is primitive if and only if $\bar{P}$ is primitive and since in a polynomial identity algebra primitive ideals are precisely those ideals that are maximal we see that $P$ is primitive if and only if it is maximal in this case.  In particular, $P$ is vacuously locally closed.  This completes the first case.

The second case is when $t\in P$.  In this case $S/P$ is a homomorphic image of $R$ and so $P$ is primitive if and only if it is maximal, and this holds if and only if $P$ is locally closed.

Finally, we may assume that $I=P\cap R$ has the property that $IS=P$ and $t\not\in P$.  Since $tS=St$ and $P$ is a prime ideal, we have that no power of $t$ is in $P$.  Let $G={\rm Spec}(R)$.  Let $X$ be the zero set of $I$.  Then $X$ is $\sigma$-invariant and $\sigma$ acts transitively on the irreducible components of $X$.  We let $X_1,\ldots ,X_d$ denote the irreducible components of $X$ and let $J=I(X_1)$.  Then $J$ is prime and is $\sigma^m$-invariant for some $m$.  By the Leroy-Matzcuk theorem, we have that $(R/P)[t;\sigma]$ is primitive if and only if $(R/J)[t^m;\sigma^m]$ is primitive (see \cite[Corollary 2.2]{LM96}) and by \cite{Theorem 2.3}[Let89], to show that $(0)$ is locally closed in $(R/P)[t;\sigma]$, it suffices to show that $(0)$ is locally closed in $(R/J)[t^m;\sigma^m]$.  Suppose first that $(0)$ is primitive.  Then since $(R/J)[t^m;\sigma^m]$ satisfies the strong Nullstellensatz, we have that $(0)$ is rational.  Since $(0)$ is rational, we have that $X_1={\rm Spec}(R/J)\cong \mathbb{G}_m^d$ for some $d\ge 0$ by Lemma \ref{lem:rat} and Lemma \ref{lem:red}.  By the Leroy-Matczuk theorem, we have that $(0)$ is primitive if and only if $\tau:=\sigma^m$ has infinite order of $X_1$ and there is some hypersurface $Y\subseteq X_1$ such that every proper $\tau$-periodic subvariety of $X_1$ has the property that some irreducible component is contained in $Y$.  In particular, by Theorem \ref{thm:translations} we have that this occurs if and only if the union of the proper $\tau$-periodic subvarieties of $X_1$ is not Zariski dense, which is the same as saying that the intersection of the nonzero $\tau$-prime ideals or $R/J$ is nonzero.  Then by Lemma \ref{lem:lc}, we see that this gives that $(0)$ is locally closed, unless $X_1$ is a point.  But if $X_1$ is a point, we have $(R/J)[t^m;\sigma^m]$ is isomorphic to $k[x]$ and $(0)$ is neither primitive nor locally closed in this case. 

Conversely, if $(0)$ is locally closed then $(0)$ is primitive since $R[t;\sigma]$ satisfies the strong Nullstellensatz \cite[Proposition 8.4.18]{Row2}.\end{proof}

In the case where we have a skew Laurent polynomial ring $S=R[x^{\pm 1};\sigma]$ the criterion for primitivity is due to Jordan \cite[Theorem 7.3]{Jor93}: in this case, $(0)$ is a primitive ideal if and only if either $R$ has a $\sigma$-special element or there is a maximal ideal $Q$ with the property that $\bigcap_{n\in \mathbb{Z}} \sigma^n(Q)=(0)$.  The property of the existence of $Q$ such that $\bigcap_{n\in \mathbb{Z}} \sigma^n(Q)=(0)$ is, conjecturally, equivalent to $\sigma$ not preserving a non-constant fibration.  (This in fact would be implied by the Medvedev-Scanlon conjecture \cite{MS-Annals}.)  Furthermore, the following easy lemma provides an interesting reduction in Jordan's criterion.

\begin{lemma}
\label{lem:red Jor}
Let $X$ be a quasiprojective variety defined over an algebraically closed field $k$ of characteristic $0$, endowed with an automorphism $\sigma$. Assume there exists a proper subvariety $Y\subset X$ which contains an iterate of each proper, irreducible, periodic subvariety of $X$, under the action of $\sigma$. Then there exists a point $x\in X(k)$ whose orbit under $\sigma$ is Zariski dense in $X$. 
\end{lemma}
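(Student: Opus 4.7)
My plan is to argue by contradiction: suppose that every $k$-point of $X$ has a non-Zariski-dense forward orbit, and derive that $X(k)$ must be covered by countably many proper closed subvarieties, which can then be excluded by a cardinality/spreading-out argument.

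The first step is structural. For $x\in X(k)$, set $Z_x := \overline{\{\sigma^n(x) : n\ge 0\}}$. Since $\sigma$ is a morphism, $\sigma(Z_x)\subseteq Z_x$, and by Noetherianity the descending chain
\[
Z_x \;\supseteq\; \sigma(Z_x) \;\supseteq\; \sigma^2(Z_x) \;\supseteq\; \cdots
\]
stabilizes at some $Z_x^\infty := \sigma^N(Z_x)$; as $\sigma$ is an automorphism, $\sigma(Z_x^\infty) = Z_x^\infty$, so $\sigma$ permutes the irreducible components of $Z_x^\infty$ and each of them is $\sigma$-periodic. Our standing assumption $Z_x \subsetneq X$ forces $Z_x^\infty \subsetneq X$, so each such component is a proper, irreducible, periodic subvariety of $X$. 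Letting $V$ be the component of $Z_x^\infty$ containing $\sigma^N(x)$, the hypothesis of the lemma yields some $m\ge 0$ with $\sigma^m(V)\subseteq Y$, whence $\sigma^{N+m}(x)\in Y$. Consequently
\[
X(k) \;=\; \bigcup_{M\ge 0}\sigma^{-M}(Y)(k),
\]
a countable union of proper closed subvarieties (since $\sigma$ is an automorphism and $Y$ is proper in the irreducible $X$).

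The second step is to rule this out. When $k$ is uncountable, I would invoke the standard Baire-type fact that an irreducible quasiprojective variety of positive dimension over an uncountable algebraically closed field is not covered by countably many proper closed subvarieties (proved by induction on $\dim X$, projecting to a curve). For general (possibly countable) $k$, I would spread out: pick a finitely generated subfield $k_0\subseteq k$ over which $X$, $\sigma$, and $Y$ are all defined, and embed $k_0$ into an uncountable algebraically closed overfield $K$. The hypothesis persists over $K$ thanks to the Hilbert scheme: for each period $p$ and each Hilbert polynomial $P$, the locus in $(\operatorname{Hilb}^P_X)^{\sigma^p}$ of proper irreducible $\sigma^p$-fixed subschemes no iterate of which lies in $Y$ is a $k_0$-constructible subscheme of finite type, with empty $k$-point set by the assumed hypothesis; since $k$ is algebraically closed, Hilbert's Nullstellensatz forces it to be empty as a scheme, and in particular to have no $K$-points.

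The main obstacle I anticipate is the descent from $K$ back to $k$: running the uncountable argument over $K$ directly produces a $K$-point of $X$ with a Zariski dense orbit, but the conclusion of the lemma demands a $k$-point. To close this gap I would observe that, by the contrapositive of the structural step, any $k$-point $x\in X(k)$ lying in the countable intersection $\bigcap_{M\ge 0}\bigl(X\setminus \sigma^{-M}(Y)\bigr)$ automatically has a Zariski dense orbit. Each $X\setminus \sigma^{-M}(Y)$ is a nonempty Zariski-open subscheme already defined over $k_0$, and by an inductive fibration argument that projects $X$ to a curve and exploits the density of $k$-points on varieties over the algebraically closed field $k$ (together with the fact that $k$ contains enough transcendentals over the finitely generated field $k_0$ to avoid the proper $k_0$-subvarieties $\sigma^{-M}(Y)$), one produces a $k$-point in this countable intersection, completing the proof.
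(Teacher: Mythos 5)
Your structural step is correct, and it reproduces the second half of the paper's argument: if $x\in X(k)$ has a non-dense orbit, the stabilized orbit closure $Z_x^\infty$ consists of proper, irreducible, $\sigma$-periodic components, so the hypothesis forces $\sigma^{N+m}(x)\in Y$ for some $N,m$, whence under the contradiction hypothesis $X(k)\subseteq\bigcup_{M\ge 0}\sigma^{-M}(Y)(k)$. Where you diverge from the paper is in how this is ruled out: the paper does not run a cardinality argument, but instead invokes a theorem of Amerik (proved by $p$-adic analytic uniformization) to produce directly a $k$-point whose entire forward orbit avoids $Y$, and then concludes at once by the same structural observation.

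The genuine gap in your plan is the descent to a countable base field. The Baire-type covering fact is intrinsically an uncountability phenomenon: over a countable algebraically closed field such as $k=\overline{\mathbb{Q}}$, the set $X(k)$ is itself countable and hence certainly \emph{can} be covered by countably many proper closed subsets (even by countably many points), so no cardinality argument alone can preclude $X(k)=\bigcup_{M}\sigma^{-M}(Y)(k)$. The fibration argument you sketch to repair this---projecting to a curve and ``exploiting that $k$ contains enough transcendentals over $k_0$''---fails exactly in the case that matters: $k_0$ is a finitely generated field of definition, and if $k=\overline{k_0}$ (the typical countable case, e.g.\ $k=\overline{\mathbb{Q}}$ with $k_0=\mathbb{Q}$), then $k$ is algebraic over $k_0$ and contains \emph{no} transcendentals over $k_0$ whatsoever. (The Hilbert-scheme step for transporting the hypothesis to an uncountable extension $K$ is plausible, but it then yields only a $K$-point, and generic $K$-points do not specialize to $k$-points while preserving ``orbit avoids $Y$''.) Producing a $k$-point over a countable $k$ whose forward orbit avoids a prescribed proper closed subset is precisely the content of Amerik's theorem, and its proof rests on $p$-adic methods, not spreading out; as written your argument cannot close this step.
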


\begin{proof}
As proven by Amerik \cite{Amerik}, there exists a point $x\in X(k)$ whose entire orbit under $\sigma$ avoids the proper, Zariski-closed subset $Y\subset X$. We claim that the orbit of $x$ under $\sigma$ must be Zariski dense in $X$. Indeed, otherwise, it must be a finite union of proper, irreducible, periodic subvarieties of $X$; then let $Z$ be any one of these subvarieties. By hypothesis, there exists $i\ge 0$ such that $\sigma^i(Z)\subset Y$. On the other hand, by construction, there exists some $m\ge 0$ such that $\sigma^m(x)\in Z$. Thus $\sigma^{m+i}(x)\in Y$, which contradicts our choice for $x$.

This concludes our proof of Lemma~\ref{lem:red Jor}.
\end{proof}

Lemma~\ref{lem:red Jor} implies that if there is a $\sigma$-special element then there is a maximal ideal $Q$ with the property that $\bigcap_{n\in \mathbb{Z}} \sigma^n(Q)=(0)$ and so we can simplify Jordan's criterion for finitely generated noetherian algebras $R$.  We record this in the following statement.

\begin{prop} \label{rem:Jordan} Let $k$ be an algebraically closed field and let $R$ be a finitely generated integral domain with an automorphism $\sigma$.  Then $R[t^{\pm 1};\sigma]$ is primitive if and only if there is a closed $k$-point $x\in X:={\rm Spec}(R)$ whose orbit under the map induced by $\sigma$ is Zariski dense in $X$.
\end{prop}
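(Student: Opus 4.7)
The plan is to combine Jordan's primitivity criterion for skew Laurent polynomial rings \cite[Theorem~7.3]{Jor93}, recalled in the paragraph preceding Lemma~\ref{lem:red Jor}, with Lemma~\ref{lem:red Jor} itself, so as to collapse Jordan's two-part disjunction into the single geometric condition of having a Zariski-dense orbit. The key observation is that the ``$\sigma$-special element'' alternative, once translated into geometry, fits directly into the hypothesis of Lemma~\ref{lem:red Jor}.

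For the ``if'' direction, suppose $x\in X(k)$ has a Zariski-dense $\sigma$-orbit, and let $Q$ be the maximal ideal of $R$ corresponding to $x$. Then $\bigcap_{n\in\Z}\sigma^n(Q)$ is the vanishing ideal of the Zariski closure of the orbit of $x$, and hence equals $(0)$. By Jordan's criterion, $R[t^{\pm 1};\sigma]$ is primitive.

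For the ``only if'' direction, assume $R[t^{\pm 1};\sigma]$ is primitive. Jordan's criterion gives two alternatives: either (1) $R$ has a $\sigma$-special element, or (2) there is a maximal ideal $Q$ with $\bigcap_{n\in\Z}\sigma^n(Q)=(0)$. In case~(2) the same computation as above, run in reverse, shows that the closed point corresponding to $Q$ has Zariski-dense orbit. In case~(1) I would invoke the geometric reformulation of $\sigma$-speciality recalled in Subsection~\ref{subsec:geometry}: the existence of a $\sigma$-special element in $R$ is equivalent to the existence of a proper closed subset $Y\subset X$ such that every $\sigma$-periodic subvariety of $X$ has an irreducible component contained in $Y$. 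Since a proper, irreducible, $\sigma$-periodic subvariety $V$ is its own unique irreducible component, $V$ itself lies in $Y$, so $Y$ contains an iterate (trivially, the $0$-th iterate) of every such $V$. Applying Lemma~\ref{lem:red Jor} then produces a closed $k$-point of $X$ whose $\sigma$-orbit is Zariski-dense, as required.

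I do not anticipate a serious obstacle: the substantive ingredient is Amerik's theorem \cite{Amerik} hidden inside Lemma~\ref{lem:red Jor}, while the two translations (between $\sigma$-special elements and a proper $Y$ absorbing periodic subvarieties, and between maximal ideals with trivial $\sigma$-intersection and dense orbits) are either recorded earlier in the paper or immediate. The only item requiring a small amount of care is the passage from ``some irreducible component of each periodic subvariety lies in $Y$'' to ``an iterate of each irreducible periodic subvariety lies in $Y$,'' which as noted is automatic in the irreducible case.
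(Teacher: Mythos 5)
Your proposal is correct and follows essentially the same route as the paper: the paper's own proof is the one-liner ``This follows immediately from Jordan's theorem \cite[Theorem 7.3]{Jor93} along with Lemma~\ref{lem:red Jor},'' and the preceding paragraph spells out that Lemma~\ref{lem:red Jor} collapses the ``$\sigma$-special element'' alternative of Jordan's disjunction into the ``maximal ideal with trivial bi-infinite $\sigma$-intersection'' alternative. You have unpacked exactly this logic: the dictionary between maximal ideals $Q$ with $\bigcap_{n\in\Z}\sigma^n(Q)=(0)$ and closed points with Zariski-dense orbit, plus the observation that the geometric reading of $\sigma$-speciality (a proper closed $Y$ absorbing some irreducible component of each periodic subvariety, hence containing each irreducible periodic subvariety outright) triggers the hypothesis of Lemma~\ref{lem:red Jor}, which in turn produces a dense orbit via Amerik's theorem. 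Nothing essential is missing, and the one subtlety you flag---reducible versus irreducible periodic subvarieties---is handled correctly.
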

\begin{proof}
This follows immediately from Jordan's theorem \cite[Theorem 7.3]{Jor93} along with Lemma~\ref{lem:red Jor}.
\end{proof}

 \begin{lemma}
 Let $k$ be an algebraically closed field and let $R$ be a finitely generated reduced commutative $k$-algebra and let $\sigma$ be a $k$-algebra automorphism of $R$.  Then $(0)$ is a rational ideal of $S=R[x^{\pm 1};\sigma]$ if and only if either $P$ is maximal or $P\cap R$ has infinite codimension in $R$ and $P=(R\cap P)S$ and the automorphism of $X={\rm Spec}(R/(P\cap R))$ induced by $\sigma$ does not preserve a non-constant fibration. \label{lem:rat}
 \end{lemma}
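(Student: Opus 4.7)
The plan is to analyse the prime $P$ of $S = R[x^{\pm 1};\sigma]$ by contracting to $I := P \cap R$ and computing the centre of the Goldie quotient $Q(S/P)$. The conjugation $xRx^{-1} = \sigma(R) \subseteq R$ forces $I$ to be $\sigma$-invariant, and since $R$ is reduced and $P$ is prime, $I$ is a $\sigma$-prime radical ideal, namely a finite intersection of minimal primes cyclically permuted by $\sigma$.

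Two cases are immediate. If $P$ is maximal, then $P$ is trivially locally closed in $\Spec(S)$, hence rational by the implications recalled just before Theorem~\ref{thm:primitive} for algebras satisfying the strong Nullstellensatz. If $P$ is non-maximal and $I$ has finite codimension in $R$, then $R/I \cong k^d$ (using $R$ reduced and $k$ algebraically closed), with $\sigma$ acting as a $d$-cycle on the idempotents by $\sigma$-primality; a direct computation identifies $S/IS$ with the crossed product $M_d(k[y^{\pm 1}])$ for $y := x^d$ (a companion-matrix presentation of $x$). Its only non-maximal prime is $(0)$, which forces $P = IS$, and then $Q(S/P) = M_d(k(y))$ has centre $k(y)$, transcendental over $k$; so $P$ is not rational, in agreement with the lemma since the infinite-codimension condition fails.

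The main case is $P$ non-maximal with $I$ of infinite codimension. After reducing to $I$ prime via Morita equivalence (passing to a minimal prime $Q$ over $I$ and replacing $\sigma$ by a power stabilising $Q$, which affects neither rationality nor the fibration condition on $X$), I set $K := Q(R/I)$. Jordan's theorem (see \cite[\S6]{Jor93}, recalled in the proof of Lemma~\ref{lem:lc}) says that when $\sigma$ has infinite order on $K$ the ring $K[x^{\pm 1};\sigma]$ is simple; localizing at nonzero elements of $R/I$ then forces $P = IS$, and $Q(S/P) = K(x;\sigma)$. A centrality calculation---commuting with $x$ forces coefficients to lie in $K^\sigma$, while commuting with $K$ forces all coefficients of positive degree in $x^{\pm 1}$ to vanish by infinite order of $\sigma$---yields that the centre equals $K^\sigma$. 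Since $k$ is algebraically closed, $K^\sigma$ is algebraic over $k$ if and only if $K^\sigma = k$, which is precisely the statement that $\sigma|_X$ preserves no non-constant fibration.

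The main obstacle is the complementary finite-order subcase on $K$: one must verify that $P$ is never rational there, which matches the failure of the lemma's right-hand side (since $\sigma$ trivially preserves a non-constant fibration in this regime, as $K^\sigma$ has positive transcendence degree over $k$). Concretely, when $\sigma$ has order $n$ on $K$, the centre of $K[x^{\pm 1};\sigma]$ is $K^\sigma[x^{\pm n}]$; using that infinite codimension of $I$ together with $\sigma$ of finite order on $K$ forces $K^\sigma \neq k$ (else $K = k$ by finite extension and algebraic closure of $k$), one shows that the centre of $Q(S/P)$ either contains the transcendental element $x^n$ (when $P = IS$) or a non-constant element of $K^\sigma$ (when $P \supsetneq IS$, via a residue-field computation exploiting the module-finiteness of $K[x^{\pm 1};\sigma]$ over its centre). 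In either case the centre fails to be algebraic over $k$, completing the biconditional.
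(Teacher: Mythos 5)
Your proof is essentially correct and rests on the same core computation as the paper's: contract $P$ to the semiprime $\sigma$-invariant ideal of $R$, pass to a single minimal prime $Q_1$ with a power $\tau$ of $\sigma$, and compute the centre of the Goldie quotient of $(R/Q_1)[x^{\pm m};\tau]$ via a skew Laurent(-series) coefficient argument, observing that central elements have $\tau$-fixed coefficients and that the fixed field $K^\tau$ equals $k$ precisely when no non-constant fibration is preserved. Where your route differs is in the \emph{case decomposition}: you branch on the codimension of $I=P\cap R$ and on whether $\sigma$ has finite or infinite order on $K=Q(R/Q_1)$, using Jordan's simplicity criterion for $K[x^{\pm 1};\sigma]$ to force $P=IS$ in the infinite-order regime, and an explicit Morita identification $S/IS\cong M_d(k[y^{\pm1}])$ in the finite-codimension case. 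The paper instead branches first on whether $P=(P\cap R)S$, invoking Irving's theorem to conclude $\sigma$ has finite order on $R/Q$ when $P$ is strictly larger, then treats the finite-codimension case at the very end as a PI/module-finiteness remark. Both reach the same place; your version makes the zero-dimensional case more transparent via the matrix-algebra picture, while the paper's use of Irving gives a slightly cleaner dichotomy and avoids having to argue separately that $P=IS$ is forced. Two small points you should tighten in a full write-up: (i) you compute the centre of $K[x^{\pm1};\sigma]$ but the lemma concerns the centre of the Goldie (division) quotient $Q(S/P)$, so you need the Laurent power-series embedding into $K((x;\sigma))$ to extend the coefficient argument (this is exactly what the paper does); and (ii) the claim that $K^\sigma$ algebraic over $k$ forces $K^\sigma=k$ uses that $k$ is algebraically closed in $K=Q(R/Q_1)$, which holds because $R/Q_1$ is a finitely generated domain over the algebraically closed field $k$, so it is worth stating this explicitly.
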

 \begin{proof} Let $Q=P\cap R$.  Then $Q$ is a semiprime $\sigma$-invariant ideal of $R$.  Then we may replace $R$ by $R/Q$ and we may assume that $P\cap R=(0)$.  If $P\neq (0)$ then by a result of Irving \cite[Theorem 4.3]{Irv79} we have that $\sigma$ has finite order on $R$ and so $R[x^{\pm 1};\sigma]$ satisfies a polynomial identity.  Since the only rational ideals of polynomial identity algebras are maximal ideals, by Kaplansky's theorem \cite[Theorem 6.1.25]{Row2}, we see that $P$ is maximal in this case.
  
 So now we may assume that $P=(0)$; i.e., $QS=P$.  Then $Q$ is semiprime and we let $Q_1,\ldots ,Q_s$ denote the prime ideals of $R$ that are minimal above $Q$.  Then $\sigma$ permutes $Q_1,\ldots ,Q_s$ and the group generated by $\sigma$ acts transitively on this set. Let $X={\rm Spec}(R/Q)$. Then $X$ is the union of the irreducible subvarieties $X_1,\ldots ,X_s$, where $X_i={\rm Spec}(R/Q_i)$.  Moreover, there is some $\tau=\sigma^m$ such that $Q_1,\ldots, Q_s$ are $\tau$-stable.  Furthermore, $(0)$ is a rational prime ideal of $(R/Q)[t^{\pm 1};\sigma]$ if and only if $(0)$ is a rational prime ideal of $(R/Q_1)[x^{\pm m};\tau]$ (see \cite[Corollary 1.2]{Let89}) and the automorphism of $X$ induced by $\sigma$ preserves a non-constant fibration if and only if the automorphism of $X_1$ induced by $\tau$ preserves a non-constant fibration (see \cite[Lemma~2.1]{3-folds}).  Then if the automorphism induced by $\tau$ does not preserve a non-constant fibration, there is a non-constant rational map $f:X_1\to \mathbb{P}^1$ such that the automorphism of $X_1$ induced by $\tau$ preserves the fibres of $f$.  In particular if we take the embedding $f^*: k(\mathbb{P}^1)\to k(X_1)$, then by construction we have $\tau$ is the identity on the image of $f^*$ and so  preserves the function field and so $f^*(k(\mathbb{P}^1))$ is a central subfield of the division ring of quotients of 
 $(R/Q_1)[t^{\pm m};\tau]$ and so $(0)$ is not a rational prime of $(R/Q_1)[t^{\pm m};\tau]$; therefore, $P$ is not rational.
 
  Next observe that if $P$ is not rational then $P_1:=Q_1R[t^{\pm m};\tau]$ is not rational. In particular, we have the Goldie ring of quotients of $(R/Q_1)[t^{\pm m};\tau]$ has a central element that is not in $k$.  Let $K$ denote the field of fractions of $R/Q_1$.  Then since $K((t^m;\tau))$ is a division ring, we have $Q((R/Q_1)[t^{\pm m};\tau])$ embeds in this skew power series ring and thus if we let $z$ denote an element of $Q( (R/Q_1)[t^{\pm m};\tau])$ that is not algebraic over $k$ then we have a Laurent power series expansion
 $$z=\sum_{i\ge -M} \alpha_i t^{mi},$$ with $\alpha_i\in K$ and $\alpha_{-M}\neq 0$.  Now $z$ commutes with $t^m$ and hence $$ \sum_{i\ge -M} \tau(\alpha_i)t^{mi+m} = t^m z = z t^m =  \sum \alpha_i t^{mi+m},$$ and so
 $\tau(\alpha_i)=\alpha_i$ for all $i\ge -M$.  In particular, if some $\alpha_i\in K\setminus k$ then $\alpha_i$ is a rational function on ${\rm Spec}(R/Q_i)$ and $\tau(\alpha_i)=\alpha_i$ and thus the automorphism induced by $\tau$ preserves a non-constant fibration.  If, on the other hand, $\alpha_i\in k$ for all $i$ then the fact that $z$ is central and using the fact that $az=za$ for all $a\in R/Q_1$ shows that $\tau^j(a)=a$ for all $a\in R/Q_1$ for which $\alpha_j\neq 0$.  Thus either $\tau$ has finite order on $R/Q_1$ or $\alpha_j=0$ for $j\neq 0$.  But the latter case gives that $z\in k$, a contradiction. Thus $\tau$ has finite order on $R/Q_1$, but in this case the automorphism induced by $\tau$ preserves a non-constant fibration unless $R/Q_1$ if finite-dimensional.  In the case that $Q_1$ has finite-codimension we have $(0)$ is not rational in $(R/Q_1)[t^{\pm m};\tau]$ since this ring is a finite module over a commutative subalgebra of Krull dimension one.
 \end{proof}

\begin{proof}[Proof of Theorem~\ref{thm:primitive} (b).]  
By the Irving-Small reduction techniques (see \cite[Lemma 8.4.29]{Row2}) we may replace $k$ by its algebraic closure and assume that $k$ is algebraically closed.  If $P$ is a prime ideal of $R[t^{\pm 1};\sigma]$ then since $R[t^{\pm 1};\sigma]$ satisfies the strong Nullstellensatz, we have that primitive ideals are rational.  Thus it suffices to prove rational ideals are primitive.  Let $P$ be a rational ideal of $R[t^{\pm 1};\sigma]$.  Then by Lemma \ref{lem:rat}, either $P$ is maximal, in which case it is primitive and there is nothing to prove, or $P=(R\cap P)R[t^{\pm 1};\sigma]$.  In the latter case, we let $Q_1,\ldots ,Q_s$ denote the minimal prime ideals above $P\cap R$ in $R$.  Then $\sigma$ permutes these primes and acts transitively on this set of primes.  Thus there is some $d$ such that $\tau=\sigma^d$ has the property that $\tau(Q_1)=Q_1$.  We have
that $R/(P\cap R)[t^{\pm 1};\sigma]$ is primitive if and only if $R/Q_1[t^{\pm d};\tau]$ is primitive (see \cite[Corollary 2.2]{LM96}) and we have that 
$Q_1R[t^d;\tau]$ is a rational prime ideal of $R[t^{\pm d},\tau]$ by \cite[Corollary 1.2]{Let89}.  Thus it suffices to prove that $(R/Q_1)[t^{\pm d};\tau]$ is primitive.  Let $X={\rm Spec}(R/Q_1)$.  Then the map induced by $\tau$ on $X$ does not preserve a non-constant fibration since $Q_1R[t^{\pm d};\tau]$ is a rational prime ideal of $R[t^{\pm d};\tau]$.  Thus by Lemma \ref{lem:red}, we have that $X\cong \mathbb{G}_m^d$ for some $d$ and so $R/Q_1\cong k[u_1^{\pm 1},\ldots , u_d^{\pm 1}]$.  Moreover, by \cite{GS-semiabelian} we have that since the map induced by $\tau$ on $X$ does not preserve a non-constant fibration, we then have that there is some $x\in X(k)$ that has a Zariski dense orbit. In particular there is a maximal ideal above $Q_1$ whose bi-infinite $\tau$ orbit has intersection equal to $Q_1$.  Then by Proposition \ref{rem:Jordan} we see that $(R/Q_1)[t^{\pm d},\tau]$ is primitive which gives that $R[t^{\pm 1};\sigma]/P$ is primitive and so $P$ is primitive.  
\end{proof}




\section*{Acknowledgments}
The authors thank Ken Brown for inspiring discussions.

\end{document}